\newtheorem{theorem}{Theorem}[section]
\newtheorem{proposition}[theorem]{Proposition}
\newtheorem{lemma}[theorem]{Lemma}
\newtheorem{corollary}[theorem]{Corollary}
\theoremstyle{definition}
\newtheorem{conjecture}[theorem]{Conjecture}
\newtheorem{remark}[theorem]{Remark}
\newcommand{\ZZ}{ \ensuremath{\mathbb{Z}}}
\newcommand{\QQ}{ \ensuremath{\mathbb{Q}}}
\newcommand{\KK}{ \ensuremath{\mathbb{K}}}
\newcommand{\Hom}{\ensuremath{\mathrm{Hom}}\hspace{1pt}}
\newcommand{\aaa}{\mathbf{a}}
\newcommand{\bb}{\mathbf{b}}
\newcommand{\ttt}{\mathbf{t}}
\newcommand{\ee}{\mathbf{e}}
\newcommand{\Hilb}{\mathrm{Hilb}}
\newcommand{\lk}{{\mathrm{lk}}}
\newcommand{\st}{\mathrm{st}}
\def\cocoa{{\hbox{\rm C\kern-.13em o\kern-.07em C\kern-.13em o\kern-.15em A}}}
\begin{document}

\title[Balanced generalized lower bound inequality]{Balanced generalized lower bound inequality\\ for simplicial polytopes}

\author{Martina Juhnke-Kubitzke}
\address{
Martina Juhnke-Kubitzke, 
FB 12-- Institut f\"ur Mathematik,
Goethe-Universit\"at Frankfurt,
Robert-Mayer Str. 10, 60325 Frankfurt am Main, Germany
}
\email{kubitzke@math.uni-frankfurt.de}

\author{Satoshi Murai}
\address{
Satoshi Murai,
Department of Pure and Applied Mathematics,
Graduate School of Information Science and Technology,
Osaka University,
Toyonaka, Osaka, 560-0043, Japan
}
\email{s-murai@ist.osaka-u.ac.jp
}



\begin{abstract}
A remarkable and important property of face numbers of simplicial polytopes 
is the generalized lower bound inequality, which says that the $h$-numbers 
of any simplicial polytope are unimodal. Recently, for balanced simplicial $d$-polytopes, that is  simplicial $d$-polytopes whose underlying graphs are $d$-colorable, Klee and Novik proposed a balanced analogue of this inequality, that is stronger than just unimodality. 
The aim of this article is to prove this conjecture of Klee and Novik. 
For this, we also show a Lefschetz property for rank-selected subcomplexes of balanced
simplicial polytopes and thereby obtain new inequalities for their $h$-numbers.
\end{abstract}

\maketitle

\section{Introduction}
The study of face numbers of convex polytopes
is one of the main themes in algebraic and geometric combinatorics and has attracted a lot of attention during the last decades. 
It has been of great interest to completely characterize the possible face numbers of simplicial polytopes and to find sufficient and necessary conditions for face numbers of classes of simplicial complexes. 
The starting point of this paper is a conjecture by Klee and Novik concerning the face numbers of balanced simplicial polytopes \cite{KN}. 

We first explain this conjecture of Klee and Novik. 
For a simplicial $d$-polytope $P$,
let $f_i(P)$ denote the number of its $i$-dimensional faces for $-1 \leq i \leq d-1$, where $f_{-1}(P)=1$ 
and define the {\em $h$-number} $h_i(P)$ of $P$ by 
$
h_i(P)=\sum_{j=0}^i (-1)^{j-i} {d-j \choose i-j} f_{j-1}(P)
$
for $0 \leq i\leq d$. 
The following Generalized Lower Bound Theorem, which holds for all simplicial polytopes, was originally conjectured by McMullen and Walkup \cite{MW} and later proved in \cite{MN,MW,St1}.

\begin{theorem}[Generalized Lower Bound Theorem]
\label{GLBT}
Let $P$ be a simplicial $d$-polytope. Then
$$h_0(P) \leq h_1(P) \leq \cdots \leq h_{\lfloor \frac d 2 \rfloor}(P).$$
Moreover, 
$h_{i-1}(P)=h_i(P)$ for some $i \leq \frac d 2$
if and only if $P$ is $(i-1)$-stacked, that is,
$P$ can be triangulated without introducing faces of dimension $\leq d-i$.
\end{theorem}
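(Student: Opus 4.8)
The plan is to pass from face numbers to commutative algebra and then to exploit the Hard Lefschetz phenomenon. Fix an infinite field $k$, write $\Delta=\partial P$, which is a simplicial $(d-1)$-sphere, and let $k[\Delta]=S/I_\Delta$ be its Stanley--Reisner ring, $S=k[x_1,\dots,x_n]$. By Reisner's criterion $k[\Delta]$ is Cohen--Macaulay of Krull dimension $d$, and since $\Delta$ is a homology sphere it is Gorenstein with socle in degree $d$. For a linear system of parameters $\Theta=\theta_1,\dots,\theta_d$ the Artinian reduction $A=k[\Delta]/(\Theta)$ is a graded Artinian Gorenstein $k$-algebra with $\dim_k A_i=h_i(P)$ for $0\le i\le d$, with $A_i=0$ for $i>d$, and with $A_d\cong k$. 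The key input I would invoke is the Hard Lefschetz property of $A$: there is $\ell\in A_1$ such that $\times\ell^{\,d-2i}\colon A_i\to A_{d-i}$ is bijective for every $i\le d/2$. For rational $P$ this is Stanley's deduction from the Hard Lefschetz theorem on the projective toric variety associated with $P$ \cite{St1}; for arbitrary simplicial $d$-polytopes it is McMullen's theorem, proved inside his polytope algebra. Granting this, for $i\le d/2$ the map $\times\ell\colon A_{i-1}\to A_i$ is injective, because composing it with $\times\ell^{\,d-2i+1}$ produces the injective map $\times\ell^{\,d-2(i-1)}\colon A_{i-1}\to A_{d-i+1}$; hence $h_{i-1}(P)=\dim_k A_{i-1}\le\dim_k A_i=h_i(P)$, which is the first assertion. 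Injectivity in these degrees also gives $\dim_k(A/\ell A)_j=h_j(P)-h_{j-1}(P)=:g_j(P)$ for $1\le j\le d/2$.

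For the equality case, the easy direction is the following. If $P$ is $(i-1)$-stacked, pick a triangulation $B$ of $P$ introducing no new faces of dimension $\le d-i$. Then $B$ and $\partial B=\partial P$ have the same faces of dimension $\le d-i$, so the relative Stanley--Reisner module of the pair $(B,\partial B)$ is generated in degrees $\ge d-i+2$; since that module is Cohen--Macaulay of Krull dimension $d$ (it is, up to shift, the canonical module of $k[B]$), the short exact sequence relating $k[B]$, $k[\Delta]$ and it yields, upon a direct Hilbert series computation, $g_i(P)=0$. This is the direction established by McMullen and Walkup \cite{MW}.

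The converse is the deep part of the theorem (the Generalized Lower Bound Conjecture), and essentially all the difficulty is here. Assume $g_i(P)=0$ for some $i\le d/2$. Since $A/\ell A$ is a standard graded $k$-algebra, its Hilbert function is an $O$-sequence by Macaulay's theorem; as its value in degree $j$ equals $g_j(P)$ for $j\le d/2$ and vanishes at $j=i$, it vanishes for all $j\ge i$, so $g_i(P)=g_{i+1}(P)=\cdots=0$ and $(A/\ell A)_j=0$ for every $j\ge i$. I would then aim to translate this collapse of $A/\ell A$, via the relation between $A/\ell A$ and the homological invariants of $k[\Delta]$ (its graded Betti numbers, equivalently the graded components of the local cohomology modules $H^j_{\mideal}(k[\Delta])$), into a vanishing statement in the relevant linear strand. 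The decisive final step is then to invoke --- or establish --- the characterization of $(i-1)$-stackedness of a homology sphere in exactly these algebraic terms, producing a triangulation of $P$ with no new faces of dimension $\le d-i$ from the algebraic data; this is the substance of the Murai--Nevo proof \cite{MN}.

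The main obstacle is precisely the converse just described: passing from the numerical identity $h_{i-1}(P)=h_i(P)$ to the \emph{existence} of a triangulation of $P$ with no low-dimensional new faces. The inequalities reduce to one citable input (Hard Lefschetz for simplicial polytopes), and the easy direction of the equality case is a Hilbert series computation; but bridging ``$A/\ell A$ vanishes in high degrees'' $\Rightarrow$ ``the face ring $k[\Delta]$ has a short linear strand / small local cohomology'' $\Rightarrow$ ``$\Delta$ is $(i-1)$-stacked'' requires the full machinery relating graded Betti numbers and local cohomology of face rings to the combinatorics of triangulations, and exhibiting the ball explicitly is the crux. I would expect the bulk of the work, following \cite{MN}, to be concentrated there.
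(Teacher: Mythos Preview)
The paper does not give its own proof of Theorem~\ref{GLBT}. The result is quoted as established background: the inequality is attributed to Stanley \cite{St1}, the ``if'' direction of the equality case to McMullen--Walkup \cite{MW}, and the ``only if'' direction to Murai--Nevo \cite{MN}. The paper's own work begins with the balanced analogues (Theorems~\ref{thm1.2}--\ref{thm1.4}), and it is there that the Lefschetz-type arguments are developed, for rank-selected subcomplexes rather than for $P$ itself.

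That said, your outline is a faithful summary of how the cited references actually prove the theorem: the inequality via Hard Lefschetz on the associated toric variety (or, for non-rational polytopes, McMullen's polytope algebra), the easy direction of the equality case by a Hilbert-series computation on a stacked triangulation, and the hard direction as the content of \cite{MN}. Your identification of the main obstacle is correct. One refinement worth noting: the Murai--Nevo argument does not quite run through Betti numbers and local cohomology in the way you sketch. Instead, from $g_i(P)=0$ and the Lefschetz property they deduce a vanishing in the Artinian reduction, and then construct the candidate triangulation combinatorially as a certain subcomplex determined by face-link conditions; the algebraic vanishing is used to verify that this explicit candidate is a ball with boundary $\partial P$ and no new interior faces of low dimension. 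So the passage from algebra to the existence of the triangulation is more constructive than your ``linear strand'' outline suggests, and this is where the real work lies.
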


We say that a simplicial $d$-polytope is {\em balanced} if its underlying graph is $d$-colorable.
Inspired by Theorem \ref{GLBT},
Klee and Novik \cite[Conjecture 5.5]{KN} proposed the following balanced analogue. 

\begin{conjecture}[Balanced Generalized Lower Bound Conjecture]
\label{bglbc}
Let $P$ be a balanced simplicial $d$-polytope. Then
$$\frac {h_0(P)} {{d\choose 0}} \leq \frac {h_1(P)} {{d\choose 1}} \leq
\cdots \leq \frac {h_{\lfloor \frac d 2 \rfloor}(P)} {{d\choose \lfloor \frac d 2 \rfloor}}.$$
Moreover,
one has $\frac {h_{i-1}(P)} {{ d \choose i-1}}= \frac {h_{i}(P)} { {d\choose i}}$
for some $i \leq \frac d 2$ if and only if $P$ has the balanced $(i-1)$-stacked property.
\end{conjecture}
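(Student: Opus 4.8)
\medskip
\noindent\emph{Strategy of proof.} Write $\Delta=\partial P$ and fix a balanced coloring $\kappa\colon V(\Delta)\to[d]$; for $S\subseteq[d]$ let $\Delta_S=\{F\in\Delta:\kappa(F)\subseteq S\}$ be the color-selected (rank-selected) subcomplex, and for $T\subseteq[d]$ let $h_T=h_T(\Delta)$ denote the flag $h$-numbers of $\Delta$. Since $\Delta$ and $\Delta_S$ share the same faces whose color set lies in $S$, one has $h_T(\Delta_S)=h_T$ for $T\subseteq S$ and hence the classical identity (Stanley)
\[
h_j(\Delta_S)=\sum_{T\subseteq S,\ |T|=j}h_T,
\]
which for $S=[d]$ reads $h_i(P)=\sum_{|S|=i}h_S$. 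The first step is the combinatorial reduction: it suffices to prove the inequalities $h_{j-1}(\Delta_S)\le h_j(\Delta_S)$ for all $S\subseteq[d]$ with $|S|=2j-1$ and $1\le j\le\lfloor d/2\rfloor$, and, granting these, equality in Conjecture~\ref{bglbc} at index $j$ is equivalent to equality in all of them. Indeed, summing such an inequality over all $S$ of size $2j-1$ and double-counting the pairs $T\subseteq S$ gives
\[
\sum_{|S|=2j-1}h_{j-1}(\Delta_S)=\binom{d-j+1}{j}h_{j-1}(P),\qquad
\sum_{|S|=2j-1}h_{j}(\Delta_S)=\binom{d-j}{j-1}h_{j}(P),
\]
and since $\binom{d-j+1}{j}/\binom{d-j}{j-1}=\frac{d-j+1}{j}=\binom{d}{j}/\binom{d}{j-1}$, summing over $S$ turns $h_{j-1}(\Delta_S)\le h_j(\Delta_S)$ into exactly $h_{j-1}(P)/\binom{d}{j-1}\le h_j(P)/\binom{d}{j}$; as every summand $h_j(\Delta_S)-h_{j-1}(\Delta_S)$ is nonnegative, equality forces termwise equality.

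The heart of the matter is a Lefschetz property for these rank-selected subcomplexes. Over a field $K$ of characteristic zero put $\theta_i=\sum_{\kappa(v)=i}x_v$; then $\theta_1,\dots,\theta_d$ is a linear system of parameters for $K[\Delta]$, each family $\{\theta_i:i\in S\}$ is one for the Cohen--Macaulay complex $\Delta_S$, and $A(\Delta_S):=K[\Delta_S]/(\theta_i:i\in S)$ is Artinian with Hilbert function $h(\Delta_S)$. I would prove that there is a linear form $\ell\in A(\Delta_S)_1$ with $\times\ell\colon A(\Delta_S)_{k-1}\to A(\Delta_S)_k$ injective for every $k\le\lceil|S|/2\rceil$; this gives $h_{k-1}(\Delta_S)\le h_k(\Delta_S)$ in that range, in particular the inequalities demanded by the first step, and these are the new $h$-vector inequalities for rank-selected subcomplexes mentioned in the abstract. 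The mechanism I envisage: $I_\Delta$, the ideal $(\theta_1,\dots,\theta_d)$, and each ideal $(x_v:\kappa(v)\notin S)$ are homogeneous for the $\ZZ^d$-grading induced by $\kappa$, so $A(\Delta)$ is $\ZZ^d$-graded and every $A(\Delta_S)$ is a $\ZZ^d$-graded quotient of it; since $\Delta$ is the boundary of a simplicial polytope, $A(\Delta)$ has the Hard Lefschetz property, and one transports a Lefschetz element through the coloring grading — choosing $\ell$ as a sum of sufficiently general elements of the color pieces $A(\Delta_S)_{e_i}$, $i\in S$ — using that $\Delta_S$ inherits extra positivity from $\Delta$ (it is doubly Cohen--Macaulay) to push injectivity one step past the middle of $h(\Delta_S)$.

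For the equality clause, the first step reduces matters to showing that $\times\ell\colon A(\Delta_S)_{j-1}\to A(\Delta_S)_{j}$ is bijective for all $S$ with $|S|=2j-1$ if and only if $P$ has the balanced $(j-1)$-stacked property. The forward direction requires a sharp structural statement — bijectivity of this Lefschetz map on the nose should force $\Delta_S$ to arise from a stacked-type construction, paralleling the equality clause of Theorem~\ref{GLBT} — after which the triangulations of the various $\Delta_S$ with $|S|=2j-1$ must be assembled into a single balanced triangulation of $P$ introducing no faces of dimension $\le d-j$. The converse is a direct computation of the flag $h$-numbers of a balanced $(j-1)$-stacked triangulation.

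The principal obstacle is the Lefschetz property for the $\Delta_S$ in the second step: these complexes are neither spheres nor Gorenstein, so Hard Lefschetz does not apply to them directly; the real work is to descend a Lefschetz element from $\partial P$ with enough control over the $\ZZ^d$-graded components to reach injectivity just past the middle, together with the delicate analysis — needed for the equality case — of exactly when the descended element fails to be a Lefschetz element.
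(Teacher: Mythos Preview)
Your combinatorial reduction is exactly the paper's: Lemma~\ref{3.3} performs the same double count, and the inequality $\overline g_i(P)\ge 0$ is reduced to $h_{i-1}(P_T)\le h_i(P_T)$ for all $T$ with $\#T=2i-1$, which in turn is deduced from the Lefschetz-type injectivity for $\QQ[P_T]$ stated as Theorem~\ref{3.2}. You have correctly located the crux.

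The genuine gap is in how you propose to obtain that injectivity. Your plan is to take the colored l.s.o.p.\ $\theta_i=\sum_{\kappa(v)=i}x_v$ for all $i$, view $A(\Delta_S)$ as a $\ZZ^d$-graded quotient of $A(\Delta)$, and ``descend'' a Lefschetz element. But Lefschetz properties do not, in general, pass to quotients, and the double Cohen--Macaulay property you invoke is precisely the open Bj\"orner--Swartz question you would be assuming. The paper's mechanism is entirely different and is the new idea: one keeps the color sums $\theta_i$ only for $i\in S=[d]\setminus T$, but replaces them by \emph{generic} linear forms $\theta'_{p+1},\dots,\theta'_d$ in the $T$-colored variables. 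Lemma~\ref{2.3}(iii) then identifies, via Matlis duality and the Gorenstein* symmetry, $(A_T/\Theta'_T A_T)^\vee$ with the submodule $(A/\Theta A)_{\ge \ee_S}$ (up to a shift), and Lemma~\ref{2.3}(i) covers the latter by a direct sum of Artinian reductions of the \emph{links} $\lk_P(F)$ over all faces $F$ with $\kappa(F)=S$. Each such link is itself the boundary of a simplicial polytope, so it has the SLP for the generic system $\Theta'_T,\omega$; the resulting surjectivity on the direct sum pushes forward to $(A/\Theta A)_{\ge \ee_S}$ and dualizes to the desired injectivity on $A_T/\Theta'_T A_T$. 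The passage through links is what your outline is missing, and the genericity of $\Theta'_T$ (rather than color sums on $T$) is what makes the link argument available.

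Finally, note that the paper does \emph{not} prove the equality clause of Conjecture~\ref{bglbc}; only the ``if'' direction (due to Klee--Novik) and the cases $i\le 2$ were known, and the paper adds the weaker implication $\overline g_{i-1}(P)=0\Rightarrow\overline g_i(P)=0$ (Theorem~\ref{thm1.4}). Your sketch for the converse direction --- assembling triangulations of the $\Delta_S$ into a balanced triangulation of $P$ --- is not carried out here and remains open.
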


We refer the reader to \cite[Definition 5.3]{KN} for the definition of the balanced stacked property. 

It is easy to see that the first inequality, namely, $h_0(P) \leq \frac {h_1(P)} d$ indeed holds.
Goff, Klee and Novik \cite{GKN} proved $(d-1) h_1(P) \leq 2 h_2(P)$, which implies the  second inequality of the conjecture
(see also \cite{BK} and \cite{KN} for further generalizations).  

Moreover, the ``if'' part of the equality case was proved in \cite[Theorem 5.8]{KN}, and the ``only if'' part could be verified for $i\leq 2$ \cite[Theorem 4.1]{KN}. 
In this paper, we give an affirmative answer to the first part of the conjecture.

For a balanced simplicial simplicial $d$-polytope $P$, let 
$$\overline g_i(P) = i h_i(P)-(d-i+1)h_{i-1}(P),$$
be the \emph{balanced $g$-numbers}, as introduced in \cite{KN}. 
Then it is easy to see that 
$\frac {h_{i-1}(P)} {{ d \choose i-1}}\leq \frac {h_{i}(P)} { {d\choose i}}$
if and only if $\overline g_i(P) \geq 0$.
Thus the next result proves the first part of Conjecture \ref{bglbc}.

\begin{theorem}
\label{thm1.2}
For any balanced simplicial $d$-polytope $P$, one has
$\overline g_i(P) \geq 0$ for all $i=0,1,\dots, \lfloor \frac d 2 \rfloor$.
\end{theorem}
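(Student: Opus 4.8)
The plan is to realize the balanced $g$-numbers $\overline{g}_i(P)$ as Hilbert function entries of a graded module admitting enough linear forms acting with strong Lefschetz-type surjectivity, exactly as in the proof of the classical Generalized Lower Bound Theorem. Concretely, I would proceed as follows. Let $\Delta$ be the boundary complex of the balanced simplicial $d$-polytope $P$, with vertex coloring $\kappa\colon V(\Delta)\to [d]$, and let $K[\Delta]=S/I_\Delta$ be the Stanley–Reisner ring over an infinite field. The $d$-colorability gives a system of parameters consisting of the color sums $\theta_1,\dots,\theta_d$ (the Stanley trick for balanced complexes), so that the Artinian reduction $A(\Delta)=K[\Delta]/(\theta_1,\dots,\theta_d)$ has Hilbert function $(h_0(P),\dots,h_d(P))$. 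The key point is that, because of the coloring, $A(\Delta)$ carries a $\ZZ^d$-grading refining the standard grading, and one has natural ``color-restriction'' operators: summing the variables of a single color $j$ gives a linear form $\omega_j\in A(\Delta)_1$. I would first prove that some generic choice of $d$ linear forms built from the colors induces, on the appropriate rank-selected pieces, multiplication maps that are injective up to the middle degree.

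The heart of the argument is the Lefschetz statement announced in the abstract: for a balanced simplicial polytope, every rank-selected subcomplex $\Delta_W$ (for $W\subseteq[d]$) has the weak Lefschetz property, or more precisely the relevant multiplication $\times \ell\colon A(\Delta)_{i-1}\to A(\Delta)_i$ restricted to color-graded strata is surjective in the appropriate range. I would deduce this from the strong Lefschetz property of $K[\Delta]$ itself, which holds since $P$ is a simplicial polytope (so $\Delta$ is the boundary of a polytope and $g$-theorem machinery — the Hard Lefschetz theorem for the toric variety, or the algebraic-shifting/Kalai-style arguments, or McMullen's weight algebra — applies). The translation from the standard-graded strong Lefschetz property to the color-refined statement is where the balancedness must be exploited: one decomposes $A(\Delta)$ under the $\ZZ^d$-grading and tracks how a generic Lefschetz element $\ell=\sum_j c_j\omega_j$ moves between the pieces, then extracts the statement that $\overline{g}_i(P)=i\,h_i(P)-(d-i+1)\,h_{i-1}(P)$ equals the dimension of a certain explicit quotient (a cokernel of a map between direct sums of color-strata), hence is nonnegative.

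Let me say more precisely what quotient I expect. Writing $h_i(P)=\dim A(\Delta)_i$ and splitting off the action of the color forms, one finds combinatorially that $\binom{d}{i}h_{i-1}(P)-\binom{d}{i-1}$ copies-count matches the total dimension of the images of the $d$ color-multiplication maps $\omega_j\colon A_{i-1}\to A_i$ when these are as injective as possible; the strong Lefschetz property guarantees exactly this maximal injectivity for $i\le d/2$. Then $\overline g_i(P)\ge 0$ becomes the inequality $\dim A_i \ge$ (that combined image dimension)$/\binom{d-1}{i-1}$ or a similarly normalized count, which is automatic once the maps have the expected rank. So the logical skeleton is: (1) balanced l.s.o.p.\ and $\ZZ^d$-grading; (2) strong Lefschetz for $K[\Delta]$ from the polytope hypothesis; (3) color-refinement of Lefschetz giving maximal-rank color-multiplication maps through the middle degree; (4) a purely numerical identification of $\overline g_i(P)$ with a nonnegative cokernel dimension.

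The main obstacle, I expect, is Step (3): passing from the ordinary strong Lefschetz property of the whole ring to the statement that the \emph{individual} color forms $\omega_j$ — or rather the induced maps on rank-selected pieces — behave well. A single generic $\ell$ being Lefschetz does not formally imply anything about the summands $\omega_j$ appearing in it, so one needs a genuinely new argument here, presumably by an inductive deletion of colors (relating $A(\Delta)$ for $\Delta$ to rank-selected subcomplexes $\Delta_{[d]\setminus\{j\}}$, which are themselves balanced spheres or balls, via a long exact Mayer–Vietoris-type or Alexander-duality sequence) together with the weak Lefschetz property for those lower-dimensional balanced complexes. Making the induction close — ensuring the rank-selected pieces inherit enough of the polytope's Lefschetz structure, and handling the boundary/ball cases — is the delicate part; everything else is bookkeeping with binomial coefficients.
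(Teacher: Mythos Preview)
Your overall architecture --- establish a Lefschetz-type injectivity for the rank-selected Artinian reductions and then extract $\overline g_i(P)\ge 0$ numerically --- matches the paper's. But Step~(3), which you correctly flag as the crux, is missing the key idea, and both mechanisms you propose fail. First, there is no known way to pass from a generic strong Lefschetz element $\ell$ on the full Artinian reduction $A=\KK[P]/\Theta$ to injectivity statements for the individual color forms $\omega_j$ or for maps between color-graded strata; the $\ZZ^d$-grading gives no leverage on how $\ell$ decomposes. Second, your fallback induction assumes that the rank-selected subcomplexes $P_{[d]\setminus\{j\}}$ are ``balanced spheres or balls,'' but they are neither: rank selection in a Gorenstein* balanced complex yields a Cohen--Macaulay (indeed doubly Cohen--Macaulay) complex that is \emph{not} Gorenstein in general, so no Lefschetz theorem applies to it and the induction does not close.

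The paper's mechanism is different and uses two ingredients absent from your outline. Instead of the SLP of $P$, it uses the SLP of the \emph{links} $\lk_P(F)$ for faces $F$ with $\kappa(F)=S=[d]\setminus T$: each such link is an honest simplicial polytope of dimension $\#T-1$, so Hard Lefschetz applies. A surjection $\bigoplus_{\kappa(F)=S}\KK[\lk_P(F)]/\Theta'_T \twoheadrightarrow (A/\Theta A)_{\ge \ee_S}$ transports the surjectivity of $\times\omega^{\#T-2i}$ to that target. Then Gorenstein duality (Matlis dual) identifies $(A/\Theta A)_{\ge \ee_S}$, up to a shift, with $\bigl(\KK[P_T]/\Theta'_T\bigr)^\vee$, converting surjectivity into the desired injectivity on the rank-selected ring. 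Your Step~(4) bookkeeping is also not the right one: the clean identity is $\binom{d-i}{k-i}\,h_i(P)=\sum_{\#T=k}h_i(P_T)$, which for $k=2i-1$ gives $\binom{d-i+1}{i}\,\overline g_i(P)=(d-i+1)\sum_{\#T=2i-1}\bigl(h_i(P_T)-h_{i-1}(P_T)\bigr)$, nonnegative by the rank-selected Lefschetz injectivity.
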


In order to prove Theorem \ref{thm1.2}, we 
study so-called rank-selected subcomplexes.
For a balanced simplicial $d$-polytope $P$, whose vertices are colored by elements in $[d]=\{1,2,\dots,d\}$, and for $T \subseteq [d]$
let $P_T$ be the rank-selected subcomplex of $P$ 
(see Section \ref{sect:Basics} for a precise definition).
We will deduce Theorem \ref{thm1.2} from the following statement for rank-selected subcomplexes.

\begin{theorem}
\label{thm1.3}
Let $P$ be a balanced simplicial $d$-polytope. 
Then for any $T \subseteq [d]$
\begin{itemize}
\item[(i)] $h_i(P_T) \leq h_{\#T -i} (P_T)$ for all $i \leq \frac {\#T} 2$.
\item[(ii)] $h_0(P_T) \leq h_1(P_T) \leq \cdots \leq h_{\lfloor \frac {\#T+1} 2 \rfloor} (P_T)$.
\end{itemize}
\end{theorem}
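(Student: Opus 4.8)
The plan is to establish a strong‑Lefschetz‑type property for the face rings of the rank‑selected subcomplexes $P_T$ and to read the two families of inequalities off from it; the Lefschetz property itself will be transported from the hard Lefschetz theorem for the polytope $P$ through the colorful Artinian reduction. Fix a proper $d$‑coloring of the vertex set of $P$, with color classes $V_1,\dots,V_d$, work over a field $K$ of characteristic $0$, and set $\theta_j=\sum_{v\in V_j}x_v\in K[P]_1$. By the Stanley--Reisner rank‑selection theorem $P_T$ is Cohen--Macaulay of dimension $\#T-1$, the images of $\{\theta_j:j\in T\}$ form a linear system of parameters for $K[P_T]$, and hence $h_i(P_T)=\dim_K\bar A_i$, where $\bar A:=K[P_T]/(\theta_j:j\in T)\,K[P_T]$ is Artinian.

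The first step, which is routine, is to note that Theorem~\ref{thm1.3} follows as soon as one produces a linear form $\omega\in\bar A_1$ such that
\[
\cdot\,\omega^{\,\#T-2i}\colon\bar A_i\longrightarrow\bar A_{\#T-i}\quad\text{is injective for all }0\le i\le\tfrac{\#T}{2}.
\]
Comparing dimensions yields part~(i) immediately. For part~(ii): if $i\le\lfloor\#T/2\rfloor$ then $\cdot\,\omega^{\,\#T-2(i-1)}\colon\bar A_{i-1}\to\bar A_{\#T-i+1}$ factors through multiplication by $\omega$, forcing $\cdot\,\omega\colon\bar A_{i-1}\to\bar A_i$ to be injective, so that $h_{i-1}(P_T)\le h_i(P_T)$; and the only remaining case $i=\lfloor(\#T+1)/2\rfloor$, which arises solely when $\#T$ is odd, is exactly the exponent‑$1$ instance $\cdot\,\omega\colon\bar A_{(\#T-1)/2}\to\bar A_{(\#T+1)/2}$ of the displayed injectivity.

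The heart of the argument is the displayed Lefschetz property, and here I would begin from the identity
\[
\bar A\;=\;A/(\bar x_v:\mathrm{col}(v)\notin T),\qquad A:=K[P]/(\theta_1,\dots,\theta_d),
\]
which holds because for $j\notin T$ the form $\theta_j$ already lies in the ideal generated by the variables $x_v$ with $\mathrm{col}(v)\notin T$; thus $A$ is the colorful Artinian reduction of the face ring of the simplicial polytope $P$, and the task splits into (a) $A$ has the strong Lefschetz property, and (b) the quotient $A/(\bar x_v:\mathrm{col}(v)\notin T)$ then satisfies the injectivity displayed above. For (a) one appeals to the hard Lefschetz theorem for simplicial polytopes --- proved via the cohomology of the associated projective toric variety after perturbing $P$ to a rational polytope of the same combinatorial type --- the one subtlety being that it is needed for the colorful linear system of parameters rather than a generic one. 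For (b) I would induct on $\#(T^c)=d-\#T$, removing one color class at a time: with $T'=T\cup\{j\}$ and $j\notin T$ one has $\bar A=A(P_T)=A(P_{T'})/(\bar x_v:v\in V_j)$, and the crucial point is the linear relation $\sum_{v\in V_j}\bar x_v=\bar\theta_j=0$ in $A(P_{T'})$; using this relation together with the Cohen--Macaulayness of every intermediate rank‑selected subcomplex, one shows that a Lefschetz element of $A(P_{T'})$ can be modified to one of $A(P_T)$, the relevant power of $\omega$ dropping by one at each step (matching $\dim P_T=\dim P_{T'}-1$).

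I expect step~(b) to be the principal obstacle: quotienting an algebra with the strong Lefschetz property by an ideal generated by a non‑generic collection of variables will in general destroy the property, and it is precisely the balanced structure --- the single linear dependency $\sum_{v\in V_j}\bar x_v=0$ among the variables of each color class, which lets the quotient be absorbed into the choice of Artinian reduction and Lefschetz element --- combined with the positivity supplied by Cohen--Macaulayness, that should make the descent go through. A secondary, more technical, point is securing step~(a) for the colorful linear system of parameters rather than merely for a generic one, which requires an additional argument specific to the colorful reduction (using that $P$ is a homology sphere, so $A$ is Gorenstein with symmetric $h$‑vector).
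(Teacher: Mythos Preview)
Your reduction of Theorem~\ref{thm1.3} to a Lefschetz-type injectivity statement for $\bar A=\KK[P_T]/(\Theta_T)$ is exactly right and matches the paper's strategy (this is the paper's Theorem~\ref{3.2}). The difficulty is entirely in establishing that injectivity, and here your outline has two genuine gaps.

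\textbf{Step (a) is not available.} The hard Lefschetz theorem for a simplicial polytope yields the SLP for a \emph{generic} l.s.o.p.\ of $\KK[P]$; it does not give SLP for the specific colorful sequence $\theta_j=\sum_{\kappa(v)=j}x_v$. Gorensteinness of the Artinian reduction only tells you the Hilbert function is symmetric; it says nothing about whether a Lefschetz element exists for that particular reduction. Whether the colorful reduction of a balanced simplicial polytope has the SLP is, to my knowledge, open, and you have not supplied an argument. The paper carefully avoids this issue: it takes the colorful forms only for the colors in $S=[d]\setminus T$, and for the colors in $T$ it uses \emph{generic} linear forms $\theta'_{p+1},\dots,\theta'_d$ in $\KK[x_v:\kappa(v)\in T]$. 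The Lefschetz input it actually invokes is SLP for the links $\lk_P(F)$ with $\kappa(F)=S$; these are simplicial $(\#T)$-polytopes, and their SLP holds for the generic $\Theta_T'$ and a generic $\omega$ by Lemma~\ref{Stanley:Lefschetz}.

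\textbf{Step (b) is not justified.} Removing one color $j$ amounts to quotienting $A(P_{T'})$ by all the variables $\bar x_v$, $v\in V_j$, subject to the single relation $\sum_{v\in V_j}\bar x_v=0$; that is, you are killing $|V_j|-1$ independent linear forms while asking the Lefschetz exponent to drop by only $1$. You assert that ``a Lefschetz element of $A(P_{T'})$ can be modified to one of $A(P_T)$'', but no mechanism is given, and injectivity of $\omega^{k}$ on an algebra does not in general descend to injectivity of $\omega^{k-1}$ on a quotient by several non-generic linear forms. Cohen--Macaulayness of the intermediate $P_{T'}$ controls Hilbert functions, not the rank of the multiplication maps.

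The paper replaces your descent by a duality argument. With the mixed l.s.o.p.\ $\Theta=(\Theta_S,\Theta_T')$ and $A=\KK[P]/\Theta$, it shows (Lemma~\ref{2.3}(i)) a surjection
\[
\psi:\ \bigoplus_{\kappa(F)=S}\big(\KK[\lk_P(F)]/\Theta_T'\big)(-\#S)\ \twoheadrightarrow\ M:=A_{\ge \ee_S},
\]
so the SLP of the links makes $\times\omega^{\#T-2i}$ \emph{surjective} on $M$ (surjectivity transports through surjections, which is the easy direction). The second key point (Lemma~\ref{2.3}(iii)) is that, because $P$ is Gorenstein*, $M$ is isomorphic up to a degree shift to the Matlis dual $(A_T/\Theta_T'A_T)^\vee$. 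Dualizing the surjectivity on $M$ yields the desired injectivity of $\times\omega^{\#T-2i}$ on $A_T/\Theta_T'A_T$. Thus the paper never needs the colorful SLP for $P$, and never attempts to push Lefschetz through a quotient; it instead pulls it back from links via a surjection and flips to injectivity by Gorenstein duality. Your proposal would need substantial new ideas to close both (a) and (b).
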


To prove the above theorem we actually show that Stanley-Reisner rings of rank-selected subcomplexes exhibit a Lefschetz property (Theorem \ref{3.2}). 
This gives a partial affirmative answer to a question posed by Bj\"orner and Swartz in \cite[Problem 4.2]{Sw}.

It follows from the $g$-theorem \cite[III Theorem 1.1]{St} that if $P$ is a simplicial $d$-polytope,
then $h_{i-2}(P)=h_{i-1}(P)$ forces $h_{i-1}(P)=h_i(P)$ for $i \leq \frac d 2$. 
We prove that a similar property holds for balanced $g$-numbers.

\begin{theorem}
\label{thm1.4}
Let $P$ be a balanced simplicial $d$-polytope.
If $\overline g_{i-1}(P)=0$ for some $i \leq \frac d 2$,
then $\overline g_{i}(P)=0$.
\end{theorem}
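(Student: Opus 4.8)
The plan is to mimic the classical argument that for simplicial polytopes $h_{i-2}=h_{i-1}$ forces $h_{i-1}=h_i$, but to carry it out on the rank-selected level, using the Lefschetz property for rank-selected subcomplexes (Theorem \ref{3.2}) in exactly the same way that the $g$-theorem is used in the classical case. First I would translate the hypothesis $\overline g_{i-1}(P)=0$ into a statement about rank-selected subcomplexes. Recall $\overline g_i(P)=i h_i(P)-(d-i+1)h_{i-1}(P)$; a standard identity (already exploited by Klee and Novik, and implicit in the discussion preceding Theorem \ref{thm1.3}) expresses $\overline g_i(P)$ as an average, over all $T\subseteq[d]$ with $\#T=$ a suitable value, of quantities of the form $h_j(P_T)-h_{j-1}(P_T)$. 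Concretely, summing $h_i(P_T)$ over all $i$-subsets $T$ recovers $\binom{d-i}{\cdot}$-weighted combinations of the $h_j(P)$; matching coefficients, $\overline g_i(P)$ becomes a nonnegative combination of the nonnegative numbers $h_{\lceil \#T/2\rceil}(P_T)-h_{\lceil \#T/2\rceil-1}(P_T)$ (nonnegativity coming from Theorem \ref{thm1.3}(ii)). Hence $\overline g_{i-1}(P)=0$ forces each of these rank-selected differences to vanish: for the relevant cardinality $m$ of $T$, one gets $h_{k}(P_T)=h_{k-1}(P_T)$ for all such $T$, where $k=\lfloor (m+1)/2\rfloor$ or the appropriate index.

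Next I would feed this equality back into the Lefschetz structure. By Theorem \ref{3.2}, for each rank-selected subcomplex $P_T$ the Artinian reduction $A(P_T)$ of its Stanley–Reisner ring has a linear form $\ell$ whose multiplication maps $\cdot\ell: A(P_T)_{k-1}\to A(P_T)_k$ have maximal rank in the appropriate range. If $h_{k-1}(P_T)=h_k(P_T)$ and this map is injective (which is the content of the Lefschetz property below the middle), then it is an isomorphism, and then by the standard "once an isomorphism, always an isomorphism" propagation for a Lefschetz element, $\cdot\ell: A(P_T)_k\to A(P_T)_{k+1}$ is surjective; combined with injectivity of the WLP map one more step out, this yields $h_k(P_T)=h_{k+1}(P_T)$ for all the relevant $T$. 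Averaging these new equalities over $T$ via the same combinatorial identity as before then produces $\overline g_i(P)=0$, which is the desired conclusion.

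The main obstacle I anticipate is the bookkeeping that links $\overline g_i(P)$ to rank-selected $h$-vectors cleanly enough that the "equality propagates" step goes through \emph{for every} $T$ in the relevant family, not just on average. One has to be careful that the vanishing of a nonnegative sum forces each summand to vanish — this needs each summand to be genuinely nonnegative, i.e. one must invoke Theorem \ref{thm1.3}(ii) (or (i)) for the exact index and cardinality that appears, and check the parity of $m$ so that the middle or near-middle $h$-numbers are the ones being compared. A secondary subtlety is the boundary case $i=\lfloor d/2\rfloor$, where $P_T$ with $\#T=d$ is $P$ itself and the "propagation one step further" must stay within the range $k+1\le \#T/2$ where the Lefschetz property is available; here the hypothesis $i\le d/2$ is exactly what guarantees there is room, but the even/odd $d$ cases should be examined separately. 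Once the indices are pinned down, the algebraic propagation is the routine part: it is the same linear-algebra lemma about Lefschetz elements used in the proof of the classical $g$-theorem, applied verbatim to $A(P_T)$.
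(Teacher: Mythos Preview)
Your overall strategy has a real gap: the combinatorial identity that links $\overline g_j(P)$ to rank-selected $h$-differences pins down the cardinality of $T$, and that cardinality is different for $j=i-1$ and $j=i$.  Concretely, the computation in the proof of Theorem~\ref{thm1.2} (and Corollary~\ref{3.4}) shows that
\[
\overline g_j(P)\ \propto\ \sum_{\#T=2j-1}\bigl(h_j(P_T)-h_{j-1}(P_T)\bigr),
\]
and this is the \emph{only} choice of $\#T$ for which the two binomial weights from Lemma~\ref{3.3} match up to give $\overline g_j$.  Hence $\overline g_{i-1}(P)=0$ translates into $h_{i-1}(P_T)=h_{i-2}(P_T)$ for all $T$ with $\#T=2i-3$, whereas the desired conclusion $\overline g_i(P)=0$ concerns $T'$ with $\#T'=2i-1$.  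Propagation of an equality within a \emph{fixed} $P_T$ cannot bridge these two families.  Moreover, even inside a single $P_T$ with $\#T=2i-3$ the propagation you invoke only yields $h_i(P_T)\le h_{i-1}(P_T)$ (surjectivity of $\times\omega$ past the middle), not equality: Theorem~\ref{3.2} gives injectivity only for $j\le \#T/2$, and $i$ lies strictly beyond that range, so there is no ``injectivity of the WLP map one more step out'' to combine with.

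The paper's argument avoids this cardinality mismatch by changing the ambient complex rather than the degree.  It first shows that $\overline g_{i-1}(P)=0$ forces $\overline g_{i-1}\bigl(\lk_P(v)\bigr)=0$ for every vertex $v$: since $(\lk_P(v))_T$ is a subcomplex of $P_T$, the Artinian reduction of its Stanley--Reisner ring is a quotient of that of $P_T$, and the vanishing of the relevant graded piece passes to quotients.  Then a Swartz-type identity,
\[
\sum_{v}\overline g_{i-1}\bigl(\lk_P(v)\bigr)=(i-1)\,\overline g_i(P)+(d-i+1)\,\overline g_{i-1}(P),
\]
immediately gives $\overline g_i(P)=0$.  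The passage to links is the missing idea in your outline; it is what allows one to stay at the \emph{same} index $i-1$ on the rank-selected side while still producing $\overline g_i$ on the global side.
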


Note that the above theorem also gives further evidence for the equality case of Conjecture \ref{bglbc}. Indeed, if the conjecture is true, then it would also imply the statement of the previous theorem. 
 
The paper is structured as follows. In Section \ref{sect:Basics} we provide the necessary background on balanced simplicial complexes and study algebraic properties of their Stanley-Reisner rings. Those results will then be employed in Section \ref{sect:Proofs} to provide the proofs of the main results Theorem \ref{thm1.2}, Theorem \ref{thm1.3} and Theorem \ref{thm1.4}.

\section{Stanley-Reisner rings of balanced simplicial complexes}\label{sect:Basics}

In this section, we recall some basic properties of balanced simplicial complexes,
and study algebraic properties of their Stanley-Reisner rings. 

We first recall basic definitions on simplicial complexes.
Let $\Delta$ be a (finite abstract) simplicial complex on the vertex set $[n]=\{1,2,\dots,n\}$.
Thus $\Delta$ is a collection of subsets of $[n]$ satisfying that
$F \in \Delta$ and $G \subset F$ imply $G \in \Delta$.
Elements of $\Delta$ are called {\em faces} of $\Delta$
and maximal faces (under inclusion) are called {\em facets}.
The {\em dimension} of a face $F \in \Delta$ is $\dim F=\#F-1$, where $\#X$ denotes the cardinality of a finite set $X$,
and the {\em dimension} of $\Delta$ is the maximum dimension of its faces.
Faces of dimension $0$ are called {\em vertices} and faces of dimension $1$ are called {\em edges}.
We define the $f$-numbers $f_{-1}(\Delta),\dots,f_{d-1}(\Delta)$ 
and $h$-numbers $h_0(\Delta),\dots,h_d(\Delta)$ of a $(d-1)$-dimensional simplicial complex $\Delta$
in the same way as for simplicial polytopes.

We say that a $(d-1)$-dimensional simplicial complex $\Delta$ on $[n]$ is {\em balanced} (completely balanced in some literature)
if 
its graph is $d$-colorable,
that is,
there is a map $\kappa :[n] \to [d]$, called a coloring map of $\Delta$,
such that $\kappa(x) \ne \kappa(y)$ for any edge $\{x,y\} \in \Delta$.
For the rest of this paper, 
we will assume that $\Delta$ is endowed with a fixed coloring map $\kappa:[n] \to [d]$ 
and we will always use this coloring map. 
For a subset $S \subseteq [d]$, we define
$$f_S(\Delta)=\# \{F \in \Delta: \kappa(F) = S\},$$
where $f_\emptyset(\Delta)=1$,
and
$$h_S(\Delta)= \sum_{T \subseteq S} (-1)^{\#S - \#T} f_S(\Delta).$$
The vectors $(f_S(\Delta) : S \subseteq [d])$
and $(h_S(\Delta): S \subseteq [d])$
are called the {\em flag $f$-vector} and the {\em flag $h$-vector} of $\Delta$, respectively.
For $T \subseteq [d]$, the simplicial complex
$$\Delta_T=\{F \in \Delta: \kappa(F) \subseteq T\}$$
is called the {\em rank-selected subcomplex} of $\Delta$.
It is easy to see that $f_S(\Delta)=f_S(\Delta_T)$ and $h_S(\Delta)=h_S(\Delta_T)$ if $S \subseteq T$.
Also, the usual $f$-numbers and $h$-numbers can be recovered from their flag counterparts by 
$f_{i-1}(\Delta)=\sum_{\# S=i} f_S(\Delta)$ and $h_i(\Delta)=\sum_{\#S =i} h_S(\Delta)$.

For a face $F \in \Delta$,
the subcomplexes
$$\lk_\Delta(F)=\{ G \in \Delta: F \cup G \in \Delta,\ F \cap G = \emptyset\}$$
and
$$\st_\Delta(F)=\{ G \in \Delta: F \cup G \in \Delta\}$$
are called the {\em link} of $F$ in $\Delta$ and 
the {\em star} of $F$ in $\Delta$, respectively.
We say that $\Delta$ is {\em Gorenstein*} (over a field $\KK$) if, for any face $F \in \Delta$ (including the empty face $\emptyset$),
$\lk_\Delta(F)$ has the same $\KK$-homology as a $(d-1-\#F)$-sphere.
The following symmetry of flag $h$-vectors of Gorenstein* balanced simplicial complexes is well-known (see \cite[Corollary 4.7]{BB}).

\begin{lemma}
\label{2.1}
If $\Delta$ is a Gorenstein* balanced simplicial complex of dimension $d-1$,
then $h_S(\Delta)=h_{[d]\setminus S}(\Delta)$ for all $S \subseteq [d]$.
\end{lemma}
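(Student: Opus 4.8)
The plan is to deduce the symmetry $h_S(\Delta)=h_{[d]\setminus S}(\Delta)$ from a functional equation satisfied by the finely graded Hilbert series of the Stanley--Reisner ring $\KK[\Delta]=\KK[x_1,\dots,x_n]/I_\Delta$. I would endow $\KK[\Delta]$ with the $\ZZ^d$-grading determined by $\deg x_v=\mathbf{e}_{\kappa(v)}$, the $\kappa(v)$-th standard basis vector. The first step is the computation of this multigraded Hilbert series: grouping the monomials of $\KK[\Delta]$ according to their support, which is some face $F\in\Delta$ carrying exactly $\#F$ distinct colours, gives
\[
\Hilb(\KK[\Delta];\mathbf{t})=\sum_{F\in\Delta}\ \prod_{i\in\kappa(F)}\frac{t_i}{1-t_i}=\sum_{S\subseteq[d]}f_S(\Delta)\prod_{i\in S}\frac{t_i}{1-t_i},
\]
and multiplying through by $\prod_{i=1}^d(1-t_i)$ and expanding the products $\prod_{i\notin S}(1-t_i)$ collapses the numerator, by the very definition of $h_S$, to $\mathcal{H}(\mathbf{t}):=\sum_{S\subseteq[d]}h_S(\Delta)\prod_{i\in S}t_i$.

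Next I would invoke the hypothesis. Since $\Delta$ is Gorenstein$^\ast$, the ring $\KK[\Delta]$ is Gorenstein of Krull dimension $d$, so its $\ZZ^d$-graded canonical module is a shift $\omega_{\KK[\Delta]}\cong\KK[\Delta](\mathbf{a})$ for some $\mathbf{a}\in\ZZ^d$. Combining the identity $\Hilb(\omega_{\KK[\Delta]};\mathbf{t})=\mathbf{t}^{-\mathbf{a}}\Hilb(\KK[\Delta];\mathbf{t})$ with the general reciprocity $\Hilb(\omega_{\KK[\Delta]};\mathbf{t})=(-1)^d\Hilb(\KK[\Delta];\mathbf{t}^{-1})$ for Cohen--Macaulay $\ZZ^d$-graded algebras, substituting the Hilbert series computed above, and simplifying with $\prod_i(1-t_i^{-1})=(-1)^d\big(\prod_i t_i^{-1}\big)\prod_i(1-t_i)$, one arrives at
\[
\mathcal{H}(\mathbf{t})=\mathbf{t}^{\,\mathbf{a}+\mathbf{1}}\,\mathcal{H}(\mathbf{t}^{-1}),\qquad \mathbf{1}=(1,\dots,1).
\]
Now $\mathcal{H}$ is a polynomial whose exponent vectors lie in $\{0,1\}^d$, with $h_\emptyset(\Delta)=1$, and moreover $h_{[d]}(\Delta)=h_d(\Delta)=1$ because the links of $\Delta$ are $\KK$-homology spheres; hence both $\mathbf{0}$ and $\mathbf{1}$ occur among the exponent vectors of $\mathcal{H}$, and feeding these into the displayed relation forces $\mathbf{a}=\mathbf{0}$. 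Therefore $\mathcal{H}(\mathbf{t})=\mathbf{t}^{\mathbf{1}}\mathcal{H}(\mathbf{t}^{-1})=\sum_{S\subseteq[d]}h_{[d]\setminus S}(\Delta)\prod_{i\in S}t_i$, and comparing the coefficient of $\prod_{i\in S}t_i$ on both sides yields $h_S(\Delta)=h_{[d]\setminus S}(\Delta)$.

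The only genuinely delicate point is marshalling the two standard commutative-algebra inputs in the correct multigraded form --- the Hilbert series reciprocity for Cohen--Macaulay $\ZZ^d$-graded rings and the characterisation of the canonical module of a Gorenstein ring as a rank-one twist --- and then pinning down the twist $\mathbf{a}$; the remaining manipulations are purely formal. An alternative, entirely combinatorial proof bypasses $\KK[\Delta]$: the desired identity is equivalent to a Dehn--Sommerville-type relation that follows by expanding $f_S$ through inclusion--exclusion over the rank-selected subcomplexes $\Delta_T$ and using that every link of $\Delta$ has the Euler characteristic of a sphere, which is essentially the argument of Bayer and Billera. As the statement is readily available in the literature, in practice I would simply cite \cite[Corollary 4.7]{BB}.
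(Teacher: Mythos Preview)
The paper does not actually prove Lemma~\ref{2.1}; it simply records the statement as well-known and cites \cite[Corollary~4.7]{BB}. Your closing sentence --- ``in practice I would simply cite \cite[Corollary~4.7]{BB}'' --- is therefore exactly what the paper does.

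Your detailed argument via the $\ZZ^d$-graded Hilbert series and canonical-module reciprocity is correct and is one of the standard ways to see the identity. The Hilbert series computation you give is precisely Lemma~\ref{2.2}(i), the reciprocity $\Hilb(\omega_A;\mathbf{t})=(-1)^d\Hilb(A;\mathbf{t}^{-1})$ is Stanley's theorem for Cohen--Macaulay $\ZZ^d$-graded algebras, and the identification of the twist $\mathbf{a}=\mathbf{0}$ via the extremal monomials $h_\emptyset=h_{[d]}=1$ is the right way to pin it down. The combinatorial alternative you sketch --- Euler-characteristic/Dehn--Sommerville relations on rank-selected subcomplexes --- is essentially the Bayer--Billera argument being cited. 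Either route is fine; the algebraic one has the advantage of fitting seamlessly with the Gorenstein duality used later in the paper (e.g., in the proof of Lemma~\ref{2.3}(iii)).
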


Next, we recall Stanley-Reisner rings.
Let $\Delta$ be a $(d-1)$-dimensional balanced simplicial complex on $[n]$
and let $R=\KK[x_1,\dots,x_n]$ be a polynomial ring over an infinite field $\KK$.
The ring
$$\KK[\Delta]=R/(x_F: F \subseteq [n],\ F \not \in \Delta),$$
where $x_F=\prod_{i \in F} x_i$,
is called the {\em Stanley-Reisner ring of $\Delta$}.
The rings $R$ and $\KK[\Delta]$ have a nice $\ZZ^d$-graded structure induced by the coloring map $\kappa$. 
More precisely, for $1\leq i\leq n$, we define $\deg x_i=\ee_{\kappa(i)}$,
where $\ee_1,\dots,\ee_d$ are the unit vectors of $\ZZ^d$. 
For a $\ZZ^d$-graded $R$-module $M$, 
we write $M_\aaa$ for the graded component of $M$ of degree $\aaa$.

Let $I \subset R$ be a homogeneous ideal and let $A=R/I$.
The {\em Krull dimension} of $A$ is the minimal number $k$
such that there is a sequence $\theta_1,\dots,\theta_k \in R$ of linear forms 
such that $\dim_\KK A/(\theta_1,\dots,\theta_k)A< \infty$.
It is well-known that the Krull dimension of $\KK[\Delta]$ equals $\dim \Delta +1$ \cite[II Theorem 1.3]{St}.
If $A$ is of Krull dimension $d$, then a sequence $\Theta=\theta_1,\dots,\theta_d$ of linear forms such that
$\dim_\KK A/\Theta A< \infty$ is called a {\em linear system of parameters} (l.s.o.p.\ for short) of $A$.
We say that a simplicial complex $\Delta$ is {\em Cohen-Macaulay} (over $\KK$)
if the ring $\KK[\Delta]$ is Cohen-Macaulay, that is, any l.s.o.p.\ of $\KK[\Delta]$
is a regular sequence of $\KK[\Delta]$. 
By Reisner's criterion \cite[II Corollary 4.2]{St},
Gorenstein* simplicial complexes are Cohen-Macaulay.
The following results due to Stanley (see \cite{St0} or \cite[III Section 3]{St}) will be of importance later on.

\begin{lemma}
\label{2.2}
Let $\Delta$ be a $(d-1)$-dimensional balanced simplicial complex on $[n]$
and let $\displaystyle{\theta_i=\sum_{v \in [n],\ \kappa(v)=i} x_v}$ for $i=1,2,\dots,d$.
Then,
\begin{itemize}
\item[(i)] we have an equality of formal power series in variables $t_1,\dots,t_d$
\begin{align*}
\sum_{\aaa \in \ZZ^d}\big( \dim_\KK \big(\KK[\Delta]\big)_\aaa \big) \ttt^\aaa
= \frac 1 {(1-t_1)(1-t_2) \cdots (1-t_d)} \left\{ \sum_{S \subseteq [d]} h_S(\Delta) \big( \textstyle \prod_{i \in S} t_i\big) \right\},
\end{align*}
where $\ttt^\aaa=t_1^{a_1} \cdots t_d^{a_d}$
for $\aaa=(a_1,\dots,a_d) \in \ZZ^d$.
\item[(ii)] for any variable $x_v$ with $\kappa(v)=i$,
$x_v^2$ is equal to zero in $\KK[\Delta]/(\theta_i \KK[\Delta])$.
\item[(iii)]
$\theta_1,\dots,\theta_d$ is an l.s.o.p.\ of $\KK[\Delta]$.
\item[(iv)]
if $\Delta$ is Cohen-Macaulay,
so are its rank-selected subcomplexes $\Delta_T$ for $T\subseteq [d]$.
\end{itemize}
\end{lemma}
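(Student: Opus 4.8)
The plan is to establish the four statements in turn; (i)--(iii) are short formal manipulations, while (iv) is the one that needs an idea. For (i): a $\KK$-basis of $\KK[\Delta]$ consists of the monomials whose support is a face of $\Delta$. Grouping these by support and using that $\kappa$ is injective on each face of the balanced complex $\Delta$, the monomials supported exactly on a face $F$ with $\kappa(F)=S$ contribute $\prod_{j\in S}\frac{t_j}{1-t_j}$ to the $\ZZ^d$-graded Hilbert series, so summing over all faces gives $\sum_{S\subseteq[d]}f_S(\Delta)\prod_{j\in S}\frac{t_j}{1-t_j}$. Writing $\prod_{j\in S}\frac{t_j}{1-t_j}=\frac{1}{\prod_{i=1}^d(1-t_i)}\big(\prod_{j\in S}t_j\big)\prod_{j\notin S}(1-t_j)$, expanding $\prod_{j\notin S}(1-t_j)$ by inclusion-exclusion, and collecting the resulting monomials by the support $W=S\cup U$ of their exponent vector, the numerator becomes $\sum_{W\subseteq[d]}\big(\sum_{T\subseteq W}(-1)^{\#W-\#T}f_T(\Delta)\big)\prod_{j\in W}t_j=\sum_{W\subseteq[d]}h_W(\Delta)\prod_{j\in W}t_j$, which is the claim. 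For (ii): fix $v$ with $\kappa(v)=i$; since $\Delta$ is balanced, $\{v,w\}\notin\Delta$ for every other vertex $w$ of color $i$, so $x_vx_w=0$ in $\KK[\Delta]$, whence $\theta_ix_v=x_v^2$ there and $x_v^2=0$ in $\KK[\Delta]/(\theta_i\KK[\Delta])$. For (iii): $\KK[\Delta]$ has Krull dimension $d$, and by (ii) every variable squares to zero in $\KK[\Delta]/(\theta_1,\dots,\theta_d)$, so that ring is spanned over $\KK$ by the finitely many squarefree monomials and is in particular finite-dimensional; hence $\theta_1,\dots,\theta_d$ is an l.s.o.p.

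For (iv): by induction on the number of deleted colors it suffices to handle the removal of a single color, say passing from $\Delta$ (colored by $[d]$) to $\Delta'=\Delta_{[d-1]}$, since a rank-selected subcomplex of $\Delta'$ is again a rank-selected subcomplex of $\Delta$. Here $\KK[\Delta']=\KK[\Delta]/\mathfrak{a}$, where $\mathfrak{a}$ is the ideal of $\KK[\Delta]$ generated by the variables $x_v$ with $\kappa(v)=d$. The key point is a decomposition of $\mathfrak{a}$ as a module: since $\Delta$ is balanced, every face contains at most one vertex of color $d$, so the monomial basis of $\mathfrak{a}$ splits according to which color-$d$ vertex lies in the support, giving a $\ZZ^d$-graded $\KK[\Delta]$-module isomorphism $\mathfrak{a}\cong\bigoplus_{\kappa(v)=d}\KK[\st_\Delta(v)](-\ee_d)$. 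Now $\st_\Delta(v)$ is the cone over $\lk_\Delta(v)$ with apex $v$; since $\Delta$ is Cohen-Macaulay so is every link (Reisner's criterion), so $\lk_\Delta(v)$ is Cohen-Macaulay of dimension $d-2$, and coning preserves Cohen-Macaulayness while raising the Krull dimension by one. Thus $\KK[\st_\Delta(v)]$ is Cohen-Macaulay of Krull dimension $d$, and $\mathfrak{a}$, being a finite direct sum of such rings, is Cohen-Macaulay of dimension $d$. Feeding this into the long exact sequence in local cohomology of $0\to\mathfrak{a}\to\KK[\Delta]\to\KK[\Delta']\to0$ yields $\depth\KK[\Delta']\ge\min\{\depth\KK[\Delta],\,\depth\mathfrak{a}-1\}=d-1$; as $\Delta'$ uses only $d-1$ colors we also have $\dim\KK[\Delta']\le d-1$, so $\KK[\Delta']$ is Cohen-Macaulay.

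The only real obstacle is (iv). The tempting route is to mod out by $\theta_d$ first and then by the ideal generated by the remaining color-$d$ variables, which by (ii) is a square-zero ideal; however, the depth estimate this produces falls one short of what is needed. The remedy is to quotient $\KK[\Delta]$ by the whole ideal $\mathfrak{a}$ at once and to notice that it is $\mathfrak{a}$ itself, rather than the quotient, that has a transparent structure: a direct sum of Stanley-Reisner rings of stars, each of full Krull dimension $d$. The depth lemma then finishes the job, and the remaining steps are routine bookkeeping.
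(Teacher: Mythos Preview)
The paper does not give a proof of this lemma at all: it simply records the statement as ``results due to Stanley'' and cites \cite{St0} and \cite[III Section~3]{St}. So there is no in-paper argument to compare against; what you have supplied is a self-contained proof where the paper only gives references.

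Your arguments for (i)--(iii) are correct and are essentially the standard ones. For (iv), your approach is a clean module-theoretic variant of the usual argument. The key observation---that the ideal $\mathfrak a=(x_v:\kappa(v)=d)\subseteq\KK[\Delta]$ decomposes as $\bigoplus_{\kappa(v)=d}\KK[\st_\Delta(v)](-\ee_d)$ because balancedness forbids two colour-$d$ vertices in one face---is correct, and the verification that multiplication by $x_v$ gives an $R$-module isomorphism $\KK[\st_\Delta(v)](-\ee_d)\cong\mathfrak a_v$ goes through. Since $\Delta$ is Cohen--Macaulay it is pure, so each $\lk_\Delta(v)$ is Cohen--Macaulay of dimension $d-2$; hence each $\KK[\st_\Delta(v)]$ has depth $d$, and the depth lemma applied to $0\to\mathfrak a\to\KK[\Delta]\to\KK[\Delta_{[d-1]}]\to0$ gives $\depth\KK[\Delta_{[d-1]}]\ge d-1=\dim\KK[\Delta_{[d-1]}]$. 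The induction on the number of removed colours is legitimate because purity of $\Delta$ forces $\Delta_{[d-1]}$ to again be $(d-2)$-dimensional and balanced with $d-1$ colours.

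Stanley's original argument (in \cite{St0}) proceeds somewhat differently: he works directly with the fine Hilbert series and the colored l.s.o.p., exploiting that $\theta_j$ for $j\in T$ restrict to an l.s.o.p.\ of $\KK[\Delta_T]$ and comparing graded pieces. Your short-exact-sequence approach is arguably more conceptual, isolating exactly why one colour can be stripped without losing depth, at the cost of invoking Reisner's criterion for links and the depth lemma. Both routes are short; yours makes the geometry of the stars do the work.
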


In the remaining part of this section,
we will prove algebraic properties of Stanley-Reisner rings of balanced simplicial complexes,
which will play a crucial role in the proofs of the main theorems. 
Let $\Delta$ be a $(d-1)$-dimensional balanced simplicial complex on $[n]$ and let $A=\KK[\Delta]$.
Fix an integer $p$ with $0 \leq p \leq d$, and let
$$S=\{1,2,\dots,p\} \mbox{ and } T=\{p+1,\dots,d\}.$$
Let $\Theta=\theta_1,\dots,\theta_p,\theta_{p+1}',\dots,\theta_d'$ be an l.s.o.p.\ of $A$
satisfying the following conditions
\begin{itemize}
\item $\displaystyle{\theta_i=\sum_{v \in [n],\ \kappa(v)=i} x_v}$ for $i=1,2,\dots,p$.
\item $\theta_i' \in \KK[x_v: \kappa(v) \in T]$ for $i=p+1,\dots,d$.
\end{itemize}
(Note that such an l.s.o.p.\ exists by Lemma \ref{2.2}(iii).)
We require these somewhat technnical conditions since, in Section \ref{sect:Proofs}, we will need to choose $\theta_{p+1}',\dots,\theta_d'$
as generic linear forms in $\KK[x_v: \kappa(v) \in T]$.
For simplicity, let $\Theta_S=\theta_1,\dots,\theta_p$
and $\Theta_T'=\theta_{p+1}',\dots,\theta_d'$.

We consider the $\ZZ^{p+1}$-grading of $R$ (and $A$) defined by
$\deg x_v= \ee_{\kappa(v)}$ if $\kappa(v) \in S$ and $\deg x_v=\ee_{0}$ if $\kappa(v) \in T$,
where $\ee_0,\dots,\ee_p$ are the unit vectors of $\ZZ^{p+1}$.
Throughout the rest of this section, we only work with this grading rather than the $\ZZ^d$-grading used in Lemma \ref{2.2}.
Note that $\theta_1,\dots,\theta_p,\theta_{p+1}',\dots,\theta_d'$ are homogeneous with respect to this grading.
Let $M$ be a finitely generated $R$-module having the just defined $\ZZ^{p+1}$-graded structure.
As we did for the usual $\ZZ^d$-grading, we use $M_\aaa$ to denote the graded component of $M$ of degree $\aaa$, where $\aaa\in\ZZ^{p+1}$.
Similarly, we write $M_{\geq \aaa} = \bigoplus_{\bb \geq \aaa} M_\bb$,
where $\bb=(b_0,\dots,b_p) \geq (a_0,\dots,a_p)=\aaa$ if $b_i \geq a_i$ for all $i$. 
Moreover,  $M(\aaa)$ refers to the graded module $M$ with grading shifted by degree $\aaa$ so that $M(\aaa)_\bb=M_{\bb+\aaa}$. Note that $M_{\geq \aaa}$ is a submodule of $M$. 
The ($\ZZ^{p+1}$-graded) {\em Hilbert series of $M$} is the formal power series in variables $t_0,t_1,\dots,t_p$ defined by
$$ \Hilb(M;t_0,t_1,\dots,t_p)
= \sum_{\aaa \in \ZZ^{p+1}} ( \dim_\KK M_\aaa) \ttt^\aaa,$$
where $\ttt^{\aaa}=t_0^{a_0} \cdots t_p^{a_p}$ for $\aaa=(a_0,\dots,a_p)\in\ZZ^{p+1}$. 
For $X \subseteq S$, we set $\ttt^X=\prod_{i \in X} t_i$ and $\ee_X= \sum_{i \in X} \ee_i$.
By Lemma \ref{2.2} (i),
the Hilbert series of $A=\KK[\Delta]$ is given by
\begin{align}
\label{2-1}
\Hilb(A;t_0,\dots,t_p)= \frac 1 { (1-t_0)^{d-p} \prod_{i=1}^p (1-t_i)}
\left\{ \sum_{X \subseteq S,\ Y \subseteq T} h_{X \cup Y} (\Delta) \ttt^X t_0^{\#Y} \right\}.
\end{align}
For a $\ZZ^{p+1}$-graded $R$-module $M$,
we write $M^\vee$ for the (graded) Matlis dual of $M$ \cite[I Section 12]{St}.
Note that if $\dim_\KK M < \infty$, then $M^\vee$ is isomorphic to $\Hom_\KK(M,\KK)$
(we only consider this case in this paper).
From now on we assume that 
all maps are degree preserving $R$-homomorphisms.

\begin{lemma}
\label{2.3}
With the same notation as above,
the following properties hold.
\begin{itemize}
\item[(i)] Let
$$N=\bigoplus_{F \in \Delta,\ \kappa(F)=S} \big( \KK[\lk_\Delta(F)]/\big(\Theta'_T \KK[\lk_\Delta(F)]\big)\big).$$
There exists a surjection of modules 
$ \psi: N(-\ee_S) \to (A/\Theta A)_{\geq \ee_S}.$
\item[(ii)]
If $\Delta$ is Cohen-Macaulay, then
$$\Hilb(A/\Theta A;t_0,\dots,t_p)=
\sum_{X \subseteq S,\ Y \subseteq T} h_{X \cup Y}(\Delta) \ttt^X t_0^{\#Y}.$$
\item[(iii)]
Let $A_T=\KK[\Delta_T]$.
If $\Delta$ is Gorenstein*, then
$$(A_T/(\Theta_T' A_T))^\vee \cong ((A/\Theta A)_{\geq \ee_S}) (\ee_S+(d-p)\ee_0).$$
\end{itemize}
\end{lemma}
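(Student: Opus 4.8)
The plan is to prove the three parts of Lemma \ref{2.3} in the stated order, since (ii) and (iii) will both lean on the structure provided by (i). Throughout I fix the $\ZZ^{p+1}$-grading described before the lemma.

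\medskip

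\noindent\textbf{Part (i).}
First I would analyze the bottom of the ideal $\Theta A$ in the $t_0^0 t_1^0 \cdots$ directions, i.e. the graded pieces of $A/\Theta A$ in degrees $\geq \ee_S$. The point is that a monomial $x_F \in A$ has multidegree $\geq \ee_S$ exactly when $\kappa(F) \supseteq S$, and by Lemma \ref{2.2}(ii) (applied with the linear forms $\theta_1,\dots,\theta_p$, which are precisely the ``color sums'' for the colors in $S$) any such monomial in $A/(\theta_1 A + \cdots + \theta_p A)$ is, modulo those $\theta_i$, equivalent to a monomial $x_G$ with $\kappa(G)=S$ restricted to the $S$-colors; more precisely, writing $F = F_S \cup F_T$ with $\kappa(F_S) = S$ exactly and $\kappa(F_T) \subseteq T$, the relations $\theta_i^{} = \sum_{\kappa(v)=i} x_v$ together with the squarefree relations let us rewrite $x_{F_S}$ as $\pm\, x_{F_S'}$ for the (unique up to the $\lk$ structure) choice of vertices, and the ``tail'' $x_{F_T}$ varies over monomials in $\KK[\lk_\Delta(F_S)]$. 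This suggests the surjection
$$
\psi : N(-\ee_S) \longrightarrow (A/\Theta A)_{\geq \ee_S},
\qquad
\psi\big( (\overline{x_{G}})_{F} \big) = \overline{x_F \cdot x_G}
$$
sending the component indexed by a face $F \in \Delta$ with $\kappa(F) = S$ and a residue class $x_G$ in $\KK[\lk_\Delta(F)]/\Theta_T' \KK[\lk_\Delta(F)]$ to the class of $x_F x_G$ in $A/\Theta A$. The degree shift $-\ee_S$ records that every element in the image carries the factor $x_F$ of degree $\ee_S$. To see $\psi$ is well-defined one checks it kills $\Theta_T' N$ (because $\theta_j' \in \KK[x_v : \kappa(v) \in T]$ lies in $A$ and multiplies compatibly) and respects the defining relations of each $\KK[\lk_\Delta(F)]$; surjectivity follows from the monomial rewriting above — every class in $(A/\Theta A)_{\geq \ee_S}$ is a $\KK$-combination of classes $\overline{x_F}$ with $\kappa(F) \supseteq S$, and such a class is the image under $\psi$ of the corresponding element of $N$. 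I would write the well-definedness as the main verification here.

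\medskip

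\noindent\textbf{Part (ii).}
This is the standard fact that, for a Cohen–Macaulay balanced complex, the $h$-numbers read off the Hilbert series of the Artinian reduction — refined to the flag setting and the $\ZZ^{p+1}$-grading. Since $\Delta$ is Cohen–Macaulay, $\Theta$ is a regular sequence on $A$, so
$$
\Hilb(A/\Theta A; t_0,\dots,t_p)
= \Hilb(A; t_0,\dots,t_p)\cdot (1-t_0)^{d-p}\prod_{i=1}^p (1-t_i),
$$
and plugging in formula \eqref{2-1} for $\Hilb(A;t_0,\dots,t_p)$ the denominator cancels, leaving exactly $\sum_{X \subseteq S,\ Y \subseteq T} h_{X\cup Y}(\Delta)\,\ttt^X t_0^{\#Y}$. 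I would just cite Lemma \ref{2.2}(i) via \eqref{2-1} and the regularity of a maximal regular sequence of linear forms on a Cohen–Macaulay ring.

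\medskip

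\noindent\textbf{Part (iii).}
Here $\Delta$ is Gorenstein*, hence Cohen–Macaulay by Reisner's criterion, and so is $\Delta_T$ by Lemma \ref{2.2}(iv); thus $\Theta_T'$ (being a system of $d-p$ linear forms in $\KK[x_v : \kappa(v)\in T]$ that is an l.s.o.p.\ of $A_T = \KK[\Delta_T]$, of Krull dimension $\#T = d-p$) is a regular sequence on $A_T$, so $A_T/\Theta_T' A_T$ is a finite-dimensional graded $\KK$-algebra and its Matlis dual is just $\Hom_\KK(A_T/\Theta_T' A_T, \KK)$. The plan is to compute both sides' Hilbert series and then promote the numerical match to a genuine isomorphism. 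On one side: by part (ii) applied to $\Delta_T$ with $S$ replaced by $\emptyset$ (so all of $\{p+1,\dots,d\}$ plays the role of ``$T$'' and there are no ``$S$-colors''), together with the flag-$h$-symmetry of Lemma \ref{2.1} for the Gorenstein* complex $\Delta$ (which gives $h_Y(\Delta_T) = h_{T\setminus Y}(\Delta)$-type relations — more precisely $h_{X\cup Y}(\Delta) = h_{([d]\setminus(X\cup Y))}(\Delta)$), I get $\Hilb(A_T/\Theta_T' A_T; t_0) = \sum_{Y \subseteq T} h_Y(\Delta)\, t_0^{\#Y}$; dualizing reverses the grading, so $\Hilb((A_T/\Theta_T' A_T)^\vee; t_0) = \sum_{Y\subseteq T} h_Y(\Delta)\, t_0^{-\#Y}$. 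On the other side: from part (i), $(A/\Theta A)_{\geq \ee_S}$ is a quotient of $N(-\ee_S)$, and when $\Delta$ is Gorenstein* each $\lk_\Delta(F)$ for $\kappa(F) = S$ is itself Gorenstein* of dimension $d-1-\#S = (d-p)-1$, so each summand $\KK[\lk_\Delta(F)]/\Theta_T' \KK[\lk_\Delta(F)]$ is a Poincaré-duality (Gorenstein Artinian) algebra whose Hilbert series is palindromic of socle degree $d-p$; summing over the $f_S(\Delta) = h_S(\Delta) + (\text{lower flag }h)$ faces and doing the $\ee_S$-shift bookkeeping, I would show that $\psi$ is in fact an isomorphism (injectivity because source and target have the same finite dimension, which one verifies by comparing with the formula from part (ii)) and that $N(-\ee_S)$ carries a perfect pairing exhibiting $((A/\Theta A)_{\geq \ee_S})^\vee \cong ((A/\Theta A)_{\geq \ee_S})(\text{shift})$. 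Matching the two Hilbert-series computations pins down the shift as $\ee_S + (d-p)\ee_0$, and then the isomorphism of part (iii) follows because a finite-dimensional graded module is determined by such a self-duality together with a module structure, or — more safely — because one exhibits the dualizing isomorphism explicitly on the monomial bases provided by part (i) and Lemma \ref{2.2}(ii).

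\medskip

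\noindent\textbf{Main obstacle.}
The delicate point is part (iii): one must be careful that the ``Matlis dual reverses degrees and shifts by the socle degree'' statement for the non-standard $\ZZ^{p+1}$-grading is applied to the right module, namely that the relevant duality is between $A_T/\Theta_T' A_T$ and the $\ee_S$-shifted degree-$\geq\ee_S$ part of $A/\Theta A$, not between two copies of the same ring; getting the shift $\ee_S + (d-p)\ee_0$ exactly right — the $\ee_S$ from the degree shift in part (i) and the $(d-p)\ee_0$ from the socle degree of the Artinian reduction of a $(d-p-1)$-dimensional Gorenstein* link — is where all the bookkeeping lives. I expect the cleanest route is to first establish that $\psi$ from part (i) is an isomorphism (via dimension count against part (ii)), then transport the canonical Gorenstein duality on each link summand through $\psi$, rather than trying to compare abstract Hilbert series alone.
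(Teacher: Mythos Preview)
Your treatment of parts (i) and (ii) is fine and matches the paper's argument in substance.

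The gap is in part (iii). You propose to show that the surjection $\psi: N(-\ee_S) \to (A/\Theta A)_{\geq \ee_S}$ from part (i) is an \emph{isomorphism} by a dimension count, and then transport the Gorenstein duality of each link summand through $\psi$. But $\psi$ is almost never injective. Concretely, $\dim_\KK N = \sum_{F:\,\kappa(F)=S} \sum_j h_j(\lk_\Delta(F))$, which (since each facet of $\Delta$ contains a unique face of color set $S$) equals $f_{d-1}(\Delta)$, the number of facets of $\Delta$. On the other hand, by part (ii) the target has dimension $\sum_{Y\subseteq T} h_{S\cup Y}(\Delta)$, which by Lemma~\ref{2.1} equals $\sum_{Y'\subseteq T} h_{Y'}(\Delta)=\sum_j h_j(\Delta_T)=f_{\#T-1}(\Delta_T)$. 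Already for the boundary of the octahedron with $\#S=1$ these are $8$ and $4$, so $\psi$ has a nontrivial kernel. Hence the strategy of pulling duality back along $\psi$ collapses, and the ``dimension count against part (ii)'' you invoke does not give what you need.

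The paper's proof of (iii) does not use part (i) at all. Instead it exploits that $A/\Theta A$ itself is a Gorenstein Artinian ring, hence self-dual: $(A/\Theta A)^\vee \cong (A/\Theta A)(\ee_S+(d-p)\ee_0)$, the shift being read off from part (ii) and $h_{[d]}(\Delta)=1$. One then dualizes the natural \emph{surjection} $A/\Theta A \twoheadrightarrow A_T/(\Theta_T'A_T)$ (coming from killing the variables of color in $S$) to obtain an \emph{injection} $(A_T/(\Theta_T'A_T))^\vee \hookrightarrow (A/\Theta A)(\ee_S+(d-p)\ee_0)$. A degree check shows this injection lands in $((A/\Theta A)_{\geq \ee_S})(\ee_S+(d-p)\ee_0)$, and then the flag-$h$ symmetry of Lemma~\ref{2.1} together with part (ii) shows the two sides have the same Hilbert series, forcing the injection to be an isomorphism. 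The key idea you are missing is this global Gorenstein self-duality of $A/\Theta A$ combined with dualizing the quotient map to $A_T/(\Theta_T'A_T)$; the link-by-link duality cannot be assembled through $\psi$.
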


\begin{proof}
(i)
For any $F \in \Delta$ with $\kappa(F)=S$, let
$$\varphi_F: \KK[\st_\Delta(F)] (-\ee_S) \to A_{\geq \ee_S}$$
be the map defined by $\varphi_F(\alpha)=x_F \alpha$.
Note that $\varphi_F$ is well-defined since $x_F x_G =0$ in $A=\KK[\Delta]$ for any $G \not \in \st_\Delta(F)$.
Since $A_{\geq \ee_S}$ is generated by monomials $x_F$ with $F\in \Delta$ and with $\kappa(F)=S$,
the sum of the maps $\varphi_F$
$$\bigoplus_{F \in \Delta,\ \kappa(F)=S} \KK[\st_\Delta(F)] (-\ee_S)\to A_{\geq \ee_S}$$
is surjective.
By composing the above map with the natural surjection $A_{\geq \ee_S} \to (A/\Theta A)_{\geq \ee_S}$, we obtain a surjection
$$\varphi: \bigoplus_{F \in \Delta,\ \kappa(F)=S} \KK[\st_\Delta(F)] (-\ee_S) \to (A/\Theta A)_{\geq \ee_S}.$$
Since, by Lemma \ref{2.1}(ii), $\varphi_F (x_i)=x_ix_F=0$ for any $i \in F$,  
the kernel of $\varphi$ contains 
$$\bigoplus_{F \in \Delta,\ \kappa(F)=S} \big((x_i:i \in F)+(\Theta_T')\big)\KK[\st_\Delta(F)].$$
Using that
$$\KK[\st_\Delta(F)]/\big((x_i:i\in F)\KK[\st_\Delta(F)]\big)\cong \KK[\lk_\Delta(F)],$$
we conclude that the map $\varphi$ induces a surjection from $N$ to $(A/\Theta A)_{\geq \ee_S}$.

(ii)
Since $A$ is Cohen-Macaulay,
$\Theta_S$ is a regular sequence of $A$.
Then the short exact sequence
$$0 \longrightarrow A/((\theta_1,\dots,\theta_{i-1})A) (-\ee_i)
\stackrel{\times \theta_i} \longrightarrow
A/((\theta_1,\dots,\theta_{i-1})A)
\longrightarrow
A/((\theta_1,\dots,\theta_{i})A)
\longrightarrow 0$$
shows that
\begin{align}
\label{2-1-2}
\Hilb(A/\Theta_S A;t_0,\dots,t_p)&=
(1-t_1) \cdots (1-t_p)\Hilb(A;t_0,\dots,t_p).
\end{align}
Similarly, since $\Theta_T'$ is a regular sequence of $A/\Theta_SA$, we have
\begin{align*}
\Hilb(A/\Theta A;t_0,\dots,t_p)&=
(1-t_0)^{d-p}\ \! \Hilb(A/\Theta_SA ;t_0,\dots,t_p)\\
&= \sum_{X \subseteq S,\ Y \subseteq T} h_{X\cup Y}(\Delta) \ttt^X t_0^{\#Y},
\end{align*}
where we use \eqref{2-1} and \eqref{2-1-2} for the second equality.

(iii) Recall that for any $\ZZ^{p+1}$-graded $R$-module $M$, one has
\begin{align}
\label{hoshi}(M^\vee)_\aaa \cong \Hom_\KK(M_{-\aaa},\KK) \cong M_{-\aaa}
\end{align}
as $\KK$-vector spaces.
Since $\Delta$ is Gorenstein*,
$A/\Theta A$ is a Gorenstein graded algebra of Krull dimension $0$.
Thus $A/\Theta A$ and $(A/\Theta A)^\vee$ are isomorphic up to a certain shift in grading \cite[I Theorem 12.5]{St}.
Since $h_{[d]}(\Delta)=h_\emptyset(\Delta)=1$, \eqref{hoshi} and the formula for the Hilbert series of $A/\Theta A$ stated in part (ii) shows that this shift of degree must be $\ee_S +(d-p)\ee_0$, that is,
$$(A/\Theta A)^\vee \cong (A/\Theta A) (\ee_S +(d-p)\ee_0).$$
Then the natural surjection
$$A/\Theta A \to A/\big(\big((x_v: \kappa(v) \in S)+(\Theta)\big)A \big) \cong A_T/(\Theta_T' A_T)$$
induces an injection
\begin{align}
\label{2-2}
(A_T/(\Theta_T' A_T))^\vee \to (A/\Theta A)^\vee \cong (A/\Theta A) (\ee_S +(d-p)\ee_0).
\end{align}
Since $A/\Theta A$ has Krull dimension $0$,
so has $A_T/(\Theta_T' A_T)$. 
Using that $A_T$ has Krull dimension $\dim \Delta_T +1 = \#T$, we can thus conclude that 
$\Theta_T'$ is an l.s.o.p.\ of $A_T$.
Since $\Delta_T$ is Cohen-Macaulay, we infer from 
 part (ii) (applied to the situation that $A=A_T$ and $S= \emptyset$)
$$\Hilb(A_T/(\Theta_T' A_T);t_0,\dots,t_p)= \sum_{Y \subseteq T} h_Y(\Delta_T) t_0^{\#Y} = \sum_{Y \subseteq T} h_Y(\Delta) t_0^{\#Y}.$$
In particular, by \eqref{hoshi} we have
\begin{align}
\label{2-3}
\Hilb\big(\big(A_T/(\Theta_T' A_T)\big)^\vee ;t_0,\dots,t_p\big) = \sum_{Y \subseteq T} h_Y(\Delta) t_0^{-\#Y}.
\end{align}
This implies $(A_T/(\Theta_T' A_T))^\vee=(A_T/(\Theta_T' A_T))^\vee_{\geq -(d-p)\ee_0}$.
Then since
$$( A/\Theta A)(\ee_S+(d-p)\ee_0)_{\geq -(d-p)\ee_0}
=((A/\Theta A)_{\geq \ee_S})(\ee_S +(d-p)\ee_0),$$
the map \eqref{2-2} induces an injection
$$\big(A_T/(\Theta_T'A_T)\big) ^\vee
\to ((A/\Theta A)_{\geq \ee_S})(\ee_S +(d-p)\ee_0).$$
Now, to obtain the desired isomorphism,
it is enough to show that the module
$((A/\Theta A)_{\geq \ee_S})(\ee_S +(d-p)\ee_0)$
has the same Hilbert series as $(A_T/(\Theta_T'A_T))^\vee$, which has been computed in \eqref{2-3}.
Indeed, part (ii) and Lemma \ref{2.1} say that the Hilbert series of $(A/\Theta A)_{\geq \ee_S}$ equals
$$\textstyle
\sum_{Y \subseteq T} h_{S \cup Y}(\Delta) \ttt^S t_0^{\#Y}
=\sum_{Y \subseteq T} h_{T \setminus Y}(\Delta) \ttt^S t_0^{\#Y},$$
and therefore we have
\begin{align*}
\Hilb\big(\big((A/\Theta A)_{\geq \ee_S} \big) (\ee_S +(d-p)\ee_0 );t_0,\dots,t_p\big)
= \sum_{Y \subseteq T} h_{T\setminus Y} (\Delta) t_0^{\#Y -\#T}
\end{align*}
which is equal to the right-hand side of \eqref{2-3}, as desired.
\end{proof}

\section{Proofs of the main results}\label{sect:Proofs}

In this section, we provide the proofs of the results listed in the introduction.
Throughout this section,
we consider the standard $\ZZ$-grading of the polynomial ring $R=\KK[x_1,\dots,x_n]$ defined by $\deg x_i=1$ for all $i$.
For a $\ZZ$-graded $R$-module $M$, let $M_k$ be the graded component of $M$ of degree $k$
and let $M(k)$ be the graded module $M$ with grading shifted by $k$.

We say that a $(d-1)$-dimensional Gorenstein* simplicial complex $\Delta$ has the {\em strong Lefschetz property} (SLP for short) over $\KK$ if there exist  an l.s.o.p.\ $\Theta$ of $\KK[\Delta]$ and a linear form $\omega$ such that the multiplication map
$$\times \omega^{d-2i} : \big(\KK[\Delta]/(\Theta \KK[\Delta])\big)_i \to \big(\KK[\Delta]/(\Theta \KK[\Delta])\big)_{d-i}$$
is an isomorphism for all $i\leq \frac d 2$.
A linear form $\omega$ satisfying the above condition is called a {\em Lefschetz element} of $\KK[\Delta]/(\Theta \KK[\Delta])$.
It is known that if $\Delta$ has the SLP, then, for a generic choice of linear forms $\theta_1,\dots,\theta_d,\omega$, the sequence
$\Theta=\theta_1,\dots,\theta_d$ is an l.s.o.p.\ of $\KK[\Delta]$
and
$\omega$ is a Lefschetz element of $\KK[\Delta]/(\Theta \KK[\Delta])$ (see \cite[Proposition 3.6]{Sw}).
Throughout this section,
we regard a simplicial polytope $P$ as a simplicial complex by identifying $P$
with its boundary complex. 
By this identification, every simplicial $d$-polytope is a Gorenstein* simplicial complex of dimension $d-1$.
The following result follows from the Hard Lefschetz theorem for projective toric varieties.
See \cite[III Section 1]{St}.

\begin{lemma}\label{Stanley:Lefschetz}
A simplicial $d$-polytope has the SLP over $\QQ$.
\end{lemma}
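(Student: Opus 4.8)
The plan is to deduce the strong Lefschetz property from the Hard Lefschetz theorem for projective toric varieties, following \cite[III Section 1]{St}. First I would reduce to polytopes with rational vertices. Being simplicial and having a prescribed combinatorial type are open conditions on the vertex coordinates, so $P$ admits a combinatorially equivalent polytope $P'$ all of whose vertices lie in $\QQ^{d}$; since combinatorially equivalent simplicial polytopes have isomorphic boundary complexes, hence isomorphic Stanley--Reisner rings $\QQ[P]\cong\QQ[P']$, the SLP holds for $P$ as soon as it holds for $P'$. Thus I may assume that $P$ is rational.

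Next, for rational $P$, I would pass to the associated projective toric variety. After a translation I may assume that the origin lies in the interior of $P$, so that the polar dual $P^{\ast}$ is again a $d$-dimensional rational polytope. Let $\Sigma$ be the face fan of $P$, that is, the complete simplicial rational fan in $\RR^{d}$ whose cones are the nonnegative hulls of the faces of $P$; it is the normal fan of $P^{\ast}$, so the associated toric variety $X=X_{\Sigma}$ is projective of complex dimension $d$ and, $\Sigma$ being simplicial, has at worst finite quotient singularities, hence is a rational homology manifold. The rays of $\Sigma$ correspond bijectively to the vertices of $P$, i.e.\ to the variables $x_{1},\dots,x_{n}$ of $R$; let $v_{1},\dots,v_{n}\in\ZZ^{d}$ be their primitive generators. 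Then $\Theta=\theta_{1},\dots,\theta_{d}$ with $\theta_{j}=\sum_{i=1}^{n}(v_{i})_{j}\,x_{i}$ is an l.s.o.p.\ of $\QQ[P]$, and the Jurkiewicz--Danilov theorem supplies an isomorphism of graded $\QQ$-algebras
$$\bigoplus_{i=0}^{d} H^{2i}(X;\QQ)\;\cong\;\QQ[P]/(\Theta\,\QQ[P])$$
that halves cohomological degrees; in particular $H^{\mathrm{odd}}(X;\QQ)=0$, $\dim_{\QQ}H^{2i}(X;\QQ)=h_{i}(P)$, and the ample class on $X$ determined by $P^{\ast}$ corresponds to a linear form $\omega$ of degree $1$ in $\QQ[P]/(\Theta\,\QQ[P])$.

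The final step is Hard Lefschetz itself. Since $X$ is a projective rational homology manifold, its rational cohomology agrees with its intersection cohomology, so cup product with the $(d-2i)$-th power of the ample class gives an isomorphism $H^{2i}(X;\QQ)\xrightarrow{\ \sim\ }H^{2(d-i)}(X;\QQ)$ for every $i\le\frac d2$. Transporting this across the isomorphism above shows that
$$\times\,\omega^{d-2i}:\ \big(\QQ[P]/(\Theta\,\QQ[P])\big)_{i}\longrightarrow\big(\QQ[P]/(\Theta\,\QQ[P])\big)_{d-i}$$
is an isomorphism for all $i\le\frac d2$, which is precisely the SLP over $\QQ$ with l.s.o.p.\ $\Theta$ and Lefschetz element $\omega$.

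I expect the two steps that carry real content to be the Jurkiewicz--Danilov identification of the Artinian reduction $\QQ[P]/(\Theta\,\QQ[P])$ with the cohomology ring $H^{*}(X;\QQ)$ and the validity of Hard Lefschetz for the possibly singular projective toric variety $X$; both are classical and are exactly the inputs used in \cite[III Section 1]{St}, so in the write-up I would cite them rather than reprove them. The only elementary point meriting an explicit sentence is the realizability of an arbitrary simplicial $d$-polytope over $\QQ$, which is immediate from the openness of simpliciality used in the first step.
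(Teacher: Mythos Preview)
Your proposal is correct and follows exactly the approach indicated by the paper, which does not give a proof but simply cites the Hard Lefschetz theorem for projective toric varieties via \cite[III Section 1]{St}. Your write-up expands this citation into the standard argument (rational realization, the Jurkiewicz--Danilov presentation of $H^*(X_\Sigma;\QQ)$ as $\QQ[P]/(\Theta\,\QQ[P])$, and Hard Lefschetz for projective simplicial toric varieties), which is precisely what the reference contains.
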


We also recall the following well-known fact (see e.g., \cite[II Section 3]{St}).

\begin{lemma}
\label{cm}
Let $\Delta$ be a simplicial complex and let $\Theta$ be an l.s.o.p.\ of $\KK[\Delta]$.
If $\Delta$ is Cohen-Macaulay over $\KK$, then $\dim_\KK (\KK[\Delta]/(\Theta \KK[\Delta]))_i=h_i(\Delta)$ for all $i$.
\end{lemma}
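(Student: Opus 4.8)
The plan is to compute the $\ZZ$-graded Hilbert series of $\KK[\Delta]/(\Theta\KK[\Delta])$ by repeatedly dividing out the elements of $\Theta$, and to compare the result with the known Hilbert series of $\KK[\Delta]$. Write $A=\KK[\Delta]$ and $d=\dim\Delta+1$, so that $\Theta=\theta_1,\dots,\theta_d$. Since $\Delta$ is Cohen-Macaulay over $\KK$, the sequence $\Theta$ is, by definition, a regular sequence on $A$; in particular, for each $1\leq i\leq d$ multiplication by $\theta_i$ is injective on $A/((\theta_1,\dots,\theta_{i-1})A)$.

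The first step is to record, for each $i$, the short exact sequence of $\ZZ$-graded $R$-modules
$$0 \longrightarrow \big(A/((\theta_1,\dots,\theta_{i-1})A)\big)(-1) \stackrel{\times\theta_i}{\longrightarrow} A/((\theta_1,\dots,\theta_{i-1})A) \longrightarrow A/((\theta_1,\dots,\theta_i)A) \longrightarrow 0,$$
whose left exactness is exactly the statement that $\theta_i$ is a non-zerodivisor on $A/((\theta_1,\dots,\theta_{i-1})A)$. Since Hilbert series are additive along short exact sequences of finitely generated graded modules, this yields $\Hilb(A/((\theta_1,\dots,\theta_i)A);t)=(1-t)\,\Hilb(A/((\theta_1,\dots,\theta_{i-1})A);t)$, and iterating over $i=1,\dots,d$ gives
$$\Hilb(A/(\Theta A);t)=(1-t)^d\,\Hilb(A;t).$$

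The second step is to invoke the standard formula for the Hilbert series of a Stanley-Reisner ring, namely $\Hilb(A;t)=\big(\sum_{j=0}^d h_j(\Delta)t^j\big)/(1-t)^d$ (this is the specialization $t_1=\cdots=t_d=t$ of Lemma \ref{2.2}(i) in the balanced case, and in general is part of \cite[II Section 3]{St}). Combining it with the previous display gives $\Hilb(A/(\Theta A);t)=\sum_{j=0}^d h_j(\Delta)t^j$, and reading off the coefficient of $t^i$ yields $\dim_\KK(A/(\Theta A))_i=h_i(\Delta)$ for all $i$ (both sides vanishing when $i<0$ or $i>d$). There is no substantial obstacle here; the only points that require care are that the Cohen-Macaulay hypothesis is precisely what makes the short exact sequences above valid, and that $A/(\Theta A)$ is finite-dimensional, so that its Hilbert series is genuinely a polynomial.
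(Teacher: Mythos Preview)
Your proof is correct and is precisely the standard argument. The paper does not actually give a proof of this lemma; it simply cites it as a well-known fact from \cite[II Section 3]{St}, and the argument you wrote is exactly the one found there.
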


The following statement is crucial for the proofs of Theorems \ref{thm1.2}, \ref{thm1.3} and \ref{thm1.4}.

\begin{theorem}
\label{3.2}
Let $P$ be a balanced simplcial $d$-polytope, $T \subseteq [d]$ and $A_T=\QQ[P_T]$.
There exist an l.s.o.p.\ $\Theta_T$ of $A_T$ and a linear form $\omega$ such that
the multiplication
$$\times \omega ^{\#T-2i} : \big(A_T/(\Theta_T A_T)\big)_i \to \big(A_T/(\Theta_T A_T)\big)_{\#T-i}$$
is injective for $i \leq \frac {\# T} 2$. 
\end{theorem}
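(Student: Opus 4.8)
The plan is to reduce the Lefschetz statement for the rank-selected subcomplex $P_T$ to the strong Lefschetz property of the full polytope $P$, which holds over $\QQ$ by Lemma \ref{Stanley:Lefschetz}. Write $d = \#T + p$ and, after relabeling colors, assume $T = \{p+1,\dots,d\}$ and $S = \{1,\dots,p\}$. Choose $\theta_1,\dots,\theta_p$ to be the color sums $\theta_i = \sum_{\kappa(v)=i} x_v$ as in Lemma \ref{2.2}, and choose $\theta_{p+1}',\dots,\theta_d'$ together with a linear form $\omega$ to be generic linear forms in $\QQ[x_v : \kappa(v)\in T]$; set $\Theta = \Theta_S,\Theta_T'$ and $\Theta_T = \Theta_T'$. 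The point of restricting $\theta'$ and $\omega$ to the variables colored by $T$ is that this is exactly the setup of Section \ref{sect:Basics}, so Lemma \ref{2.3} applies.

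The key chain of steps is as follows. First, by Lemma \ref{2.3}(i) there is a surjection $\psi : N(-\ee_S) \twoheadrightarrow (A/\Theta A)_{\geq \ee_S}$, where $N$ is a direct sum, over faces $F$ with $\kappa(F) = S$, of copies of $\QQ[\lk_P(F)]/(\Theta_T'\QQ[\lk_P(F)])$. Now each link $\lk_P(F)$ is the boundary complex of a simplicial $(d-p)$-polytope (it is a polytopal sphere, being a link in a simplicial polytope), hence has the SLP over $\QQ$ by Lemma \ref{Stanley:Lefschetz}; since $\omega,\theta_{p+1}',\dots,\theta_d'$ are generic in the right variable set, $\omega$ is simultaneously a Lefschetz element for all these finitely many quotients. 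Therefore multiplication by $\omega^{(d-p)-2i}$ is injective on the degree-$i$ part of $N$ for $i \leq \tfrac{d-p}{2} = \tfrac{\#T}{2}$, and via the surjection $\psi$ the same multiplication map is \emph{surjective} from degree $i+p$ to degree $(d-p-i)+p = d-i$ on $(A/\Theta A)_{\geq\ee_S}$, i.e. on $(A/\Theta A)$ in those degrees (once we are in degrees $\geq \ee_S$, which holds for total degree $\geq p$, and $i + p \geq p$). Next, Lemma \ref{2.3}(iii) identifies $(A_T/(\Theta_T'A_T))^\vee$ with $((A/\Theta A)_{\geq\ee_S})(\ee_S + (d-p)\ee_0)$, and Matlis duality turns the surjective multiplication map $\times\omega^{\#T-2i}$ on $A/\Theta A$ into an \emph{injective} multiplication map $\times\omega^{\#T-2i}$ on $A_T/(\Theta_T'A_T)$ in the appropriate degrees — translating the degree shift carefully, surjectivity from degree $d-i$ onto degree $i$ dualizes to injectivity from degree $i$ into degree $\#T - i$. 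Finally, Lemma \ref{cm} (applied to $P_T$, which is Cohen-Macaulay by Lemma \ref{2.2}(iv)) and the fact that $h_i(P_T) \le h_{\#T-i}(P_T)$ is exactly consistent with injectivity in this range confirm everything is dimension-compatible.

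Concretely I would organize the proof as: (1) set up colors, $\Theta$, and genericity of $\theta_T',\omega$; (2) invoke the SLP for each link polytope and use genericity to get a common Lefschetz element $\omega$, yielding injectivity of $\times\omega^{(d-p)-2i}$ on $N$ in degree $i \le \tfrac{\#T}{2}$; (3) push forward through $\psi$ to get surjectivity of $\times\omega^{\#T-2i} : (A/\Theta A)_{p+i} \to (A/\Theta A)_{d-i}$; (4) dualize using Lemma \ref{2.3}(iii) to convert this into injectivity of $\times\omega^{\#T-2i} : (A_T/(\Theta_T A_T))_i \to (A_T/(\Theta_T A_T))_{\#T-i}$. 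The main obstacle I expect is step (3)–(4): one has to be scrupulous about the two gradings in play. The arguments in Section \ref{sect:Basics} were carried out with the $\ZZ^{p+1}$-grading, but the Lefschetz statement is about the standard $\ZZ$-grading; one must check that $\omega$, being built from $T$-colored variables, has standard degree $1$ and $\ZZ^{p+1}$-degree $\ee_0$, so that $\omega^{\#T-2i}$ shifts the $\ZZ^{p+1}$-degree by $(\#T-2i)\ee_0$, and then verify that surjectivity of multiplication in the $\ZZ^{p+1}$-graded module $(A/\Theta A)_{\geq \ee_S}$ in the relevant degrees really does restrict to surjectivity in standard degrees $p+i \to d-i$ (using that $(A/\Theta A)_{\geq\ee_S}$ is concentrated in $\ZZ^{p+1}$-degrees with $\ee_S$-part exactly $\ee_S$ and $t_0$-exponent between $0$ and $d-p$, by the Hilbert series in Lemma \ref{2.3}(ii) and Lemma \ref{2.1}). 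Once the bookkeeping between surjectivity upstairs and injectivity downstairs via Matlis duality is pinned down, the statement follows.
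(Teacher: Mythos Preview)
Your proposal follows essentially the same route as the paper's proof: set up the colored l.s.o.p., use the surjection $\psi$ from Lemma \ref{2.3}(i), apply the SLP of the links (which are simplicial polytopes) to get a multiplication statement on $N$, push forward to $(A/\Theta A)_{\geq \ee_S}$, and then dualize via Lemma \ref{2.3}(iii) to obtain injectivity on $A_T/(\Theta_T'A_T)$.

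One slip to fix: in step (2) you write that the SLP of the links gives \emph{injectivity} of $\times\omega^{\#T-2i}$ on $N$ in degree $i$, and then in step (3) you ``push forward through $\psi$'' to conclude \emph{surjectivity} on $(A/\Theta A)_{\geq \ee_S}$. That deduction is not valid as stated: in the commutative square
\[
\begin{array}{ccc}
N_{\#T-i} & \stackrel{\psi}{\longrightarrow} & M_{d-i}\\
{\scriptstyle \times\omega^{\#T-2i}}\uparrow\phantom{\scriptstyle \times\omega^{\#T-2i}} & & \phantom{\scriptstyle \times\omega^{\#T-2i}}\uparrow{\scriptstyle \times\omega^{\#T-2i}}\\
N_{i} & \stackrel{\psi}{\longrightarrow} & M_{i+p}
\end{array}
\]
you need the left vertical map to be \emph{surjective} (together with $\psi$ surjective) to conclude the right vertical map is surjective; injectivity on $N$ alone does not suffice. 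Of course the SLP actually gives a bijection on each link summand, so the left map is surjective and the argument goes through --- just state surjectivity (or bijectivity) rather than injectivity there, exactly as the paper does.
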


\begin{proof}
Let $S=[d] \setminus T$.
We may assume that $S=\{1,2,\dots,p\}$ and $T=\{p+1,\dots,d\}$ for some $0 \leq p \leq d$.
Let $\Theta_S=\theta_1,\dots,\theta_p$ with $\theta_i=\sum_{\kappa(v)=i} x_v$ for $i=1,2,\dots,p$.
Since a link in a simplicial polytope is again a simplicial polytope,
for a sufficiently generic choice of linear forms $\theta_{p+1}',\dots,\theta_{d}',\omega$ in $\QQ[x_v: \kappa(v) \in T]$,
we have
\begin{itemize}
\item[(a)] $\Theta_T'=\theta_{p+1}',\dots,\theta_d'$ is an l.s.o.p.\ of $\QQ[\lk_P(F)]$ and $\omega$ is a Lefschetz element of $\QQ[\lk_P(F)]/(\Theta_T' \QQ[\lk_P(F)])$ for any face $F$ of $P$ with $\kappa(F)=S$. 
\item[(b)] $\Theta_T'$ is an l.s.o.p.\ of $\QQ[P_T]$.
\item[(c)] $(\Theta_S,\Theta_T')$ is an l.s.o.p.\ of $\QQ[P]$.
\end{itemize}
((b) and (c) follow from \cite[III Lemma 2.4]{St}.) 
Let $M=(\QQ[P]/((\Theta_S,\Theta_T') \QQ[P]))_{\geq \ee_S}$
and
$\displaystyle{N=\bigoplus_{F \in P,\ \kappa(F)=S} \QQ[\lk_P(F)]/(\Theta_T' \QQ[\lk_P(F)])}$.
By Lemma \ref{2.3} (i), there exists a surjection $\psi: N(-\#S) \to M$.
Consider the commutative diagram
\begin{eqnarray*}
\begin{array}{cccccc}
N_{\#T-i} & \stackrel \psi \longrightarrow & M_{\# T-i+\#S}\medskip\\
\times \omega^{\#T -2i} \uparrow\hspace{40pt} & & \hspace{40pt} \uparrow \times \omega^{\#T -2i}\smallskip\\
N_{i} & \stackrel \psi \longrightarrow & M_{i+\#S},
\end{array}
\end{eqnarray*}
where $i \leq \frac {\#T} 2$.
Since $\psi$ and the left vertical map are surjective by (a),
the multiplication
\begin{align*}
\times \omega^{\#T -2i} : M_{i+ \#S} \longrightarrow M_{\# T-i+\#S}=M_{d-i}
\end{align*}
is surjective for $i \leq \frac {\#T} 2$.
Since, by Lemma \ref{2.3} (iii), $M$ and $(A_T/(\Theta_T' A_T))^\vee$ are isomorphic up to a shift of degree $d$, the above surjectivity implies that the multiplication
$$\times \omega^{\#T -2i} : \big( A_T/(\Theta_T' A_T)\big)^\vee _{i+\#S-d}
\to \big( A_T/(\Theta_T' A_T)\big)^\vee _{-i}
$$
is surjective for $i \leq \frac {\#T} 2$.
Since $i+\#S-d=-(\#T-i)$,
the desired injectivity follows from the fact that $(L^\vee)^\vee \cong L$ for any graded $R$-module $L$.
\end{proof}

\begin{remark}
It was asked by Bj\"orner and Swartz \cite[Problem 4.2]{Sw} if the conclusion of Theorem \ref{3.2} holds for all doubly Cohen-Macaulay complexes. 
As rank-selected subcomplexes of balanced simplicial polytopes are doubly Cohen-Macaulay, Theorem \ref{3.2} gives a partial affirmative answer to this question. It was shown by Swartz that the conjecture is true for simplicial complexes, which have convex ear decompositions \cite[Theorem 3.9]{Sw}. However, it is not known at present if rank-selected subcomplexes $P_T$ have convex ear decompositions.
\end{remark}

\begin{remark}
Although Theorem \ref{3.2} does not show that rank-selected subcomplexes possess the strong or weak Lefschetz property in the sense of \cite{Book}, it does    show that those complexes have a similar (though weaker) property. 
This type of property is often referred to as Lefschetz properties.
\end{remark}


We can now provide the proof of Theorem \ref{thm1.3}.

\begin{proof}[Proof of Theorem \ref{thm1.3}]
We use the same notation as in Theorem \ref{3.2}.
Since $P_T$ is Cohen-Macaulay,
$\dim_\QQ(A_T/(\Theta_T A_T))_k=h_k(P_T)$ for all $k$.
Then the first statement directly follows from the injectivity in Theorem \ref{3.2}.
From this injectivity we also deduce that the multiplication map
$\times w :(A_T/(\Theta_T A_T))_{i-1} \to (A_T/(\Theta_T A_T))_{i}$ 
is injective for $i \leq \frac {\#T+1} 2$, which implies the second statement.
\end{proof}

Next, we prove Theorem \ref{thm1.2}.

It follows from the next lemma that Theorem \ref{thm1.3} implies Theorem \ref{thm1.2}. This was also noted in \cite{GKN} (see discussion after Theorem 5.3 in \cite{GKN}), but we include the proof for completeness.

\begin{lemma}
\label{3.3}
Let $\Delta$ be a $(d-1)$-dimensional balanced simplicial complex.
For $i \leq k \leq d$, we have
$${d-i \choose k-i} h_i(\Delta)= \sum_{T \subseteq [d],\ \#T=k} h_i(\Delta_T).$$
\end{lemma}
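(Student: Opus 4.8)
The plan is to prove the identity by a double-counting argument on flag $h$-numbers, reducing everything to the combinatorial facts $h_i(\Delta) = \sum_{\#S = i} h_S(\Delta)$ and $h_S(\Delta) = h_S(\Delta_T)$ whenever $S \subseteq T$, both of which are recorded in Section \ref{sect:Basics}. First I would expand the right-hand side using the decomposition of the ordinary $h$-number of each rank-selected subcomplex into flag pieces: for each $T \subseteq [d]$ with $\#T = k$, one has $h_i(\Delta_T) = \sum_{S \subseteq T,\ \#S = i} h_S(\Delta_T)$, and since $S \subseteq T$ this flag $h$-number coincides with $h_S(\Delta)$. Substituting, the right-hand side becomes a double sum $\sum_{\#T = k} \sum_{S \subseteq T,\ \#S = i} h_S(\Delta)$.

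The key step is then to interchange the order of summation and count, for each fixed $S \subseteq [d]$ with $\#S = i$, the number of $k$-subsets $T$ of $[d]$ containing $S$. That count is $\binom{d-i}{k-i}$, since after placing the $i$ elements of $S$ into $T$ one must choose the remaining $k - i$ elements from the $d - i$ elements of $[d] \setminus S$. Hence the double sum equals $\binom{d-i}{k-i} \sum_{\#S = i} h_S(\Delta) = \binom{d-i}{k-i} h_i(\Delta)$, which is exactly the left-hand side. This completes the argument.

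I do not anticipate a serious obstacle here: the only things to be careful about are the bookkeeping on which flag $h$-numbers are being summed (making sure the condition $S \subseteq T$ is what licenses the replacement $h_S(\Delta_T) = h_S(\Delta)$) and the elementary binomial count of supersets. If anything deserves a sentence of justification it is the equality $h_i(\Delta_T) = \sum_{S \subseteq T,\ \#S=i} h_S(\Delta_T)$, which is simply the ``flag $h$-numbers refine ordinary $h$-numbers'' statement applied to the complex $\Delta_T$ viewed as a balanced complex with coloring $\kappa$ restricted to the colors in $T$. No Lefschetz-type or homological input is needed for this lemma; it is purely formal.
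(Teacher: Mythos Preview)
Your argument is correct and is precisely the ``routine double counting argument'' the paper alludes to: expand $h_i(\Delta_T)$ into flag $h$-numbers $h_S(\Delta)$ with $S\subseteq T$, interchange summations, and count the $\binom{d-i}{k-i}$ supersets $T$ of each fixed $S$. There is nothing to add.
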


\begin{proof}
The assertion follows  from equations $h_i(\Delta)=\sum_{\# S=i} h_S(\Delta)$
and $h_i(\Delta_T)=\sum_{S \subseteq T,\ \#S=i} h_S(\Delta)$
by a routine double counting argument.
\end{proof}

\begin{proof}[Proof of Theorem \ref{thm1.2}]
By Lemma \ref{3.3},
$$ {d-i \choose i-1} h_i(P)= \sum_{T \subseteq [d],\ \#T=2i-1} h_i(P_T)$$
and
$$ {d-i+1 \choose i} h_{i-1}(P)= \sum_{T \subseteq [d],\ \#T=2i-1} h_{i-1}(P_T).$$
Since $i {d-i+1 \choose i} = (d-i+1){d-i \choose i-1}$,
the above equations say
\begin{align*}
{d-i+1 \choose i}\overline{g}_i(P)
&=(d-i+1) \left\{ {d-i \choose i-1}h_i(P)- { d-i+1 \choose i} h_{i-1}(P) \right\}\\
&=(d-i+1) \left( \sum_{T \subseteq [d],\ \#T =2i-1} h_i(P_T)-h_{i-1}(P_T)\right),
\end{align*}
which is non-negative by Theorem \ref{thm1.3}, as desired.
\end{proof}

The above proof clearly implies the following corollary.

\begin{corollary}
\label{3.4}
Let $P$ be a balanced simplicial $d$-polytope and $1\leq i\leq  \lfloor \frac {d+1} 2 \rfloor$. Then, $\overline g_i(P)=0$ if and only if
$h_i(P_T)=h_{i-1}(P_T)$ for all $T \subseteq [d]$ with $\#T=2i-1$.
\end{corollary}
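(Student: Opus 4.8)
\textbf{Proof proposal for Corollary \ref{3.4}.}
The plan is to extract this as a direct consequence of the computation already carried out in the proof of Theorem \ref{thm1.2}. The key observation there is the exact identity
\begin{align*}
{d-i+1 \choose i}\,\overline{g}_i(P)
=(d-i+1)\sum_{T \subseteq [d],\ \#T=2i-1}\bigl(h_i(P_T)-h_{i-1}(P_T)\bigr).
\end{align*}
So first I would invoke this identity, noting that the binomial coefficient $\binom{d-i+1}{i}$ and the factor $(d-i+1)$ are both strictly positive in the relevant range $1 \le i \le \lfloor\frac{d+1}{2}\rfloor$ (for the upper end one checks $d-i+1 \ge i$, hence nonzero). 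Therefore $\overline g_i(P)=0$ holds precisely when the sum on the right vanishes.

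The second step is to argue that each summand $h_i(P_T)-h_{i-1}(P_T)$ is nonnegative, so that a sum of them is zero iff every term is zero. For $\#T = 2i-1$ the index $i$ satisfies $i-1 \le \frac{\#T}{2}$ and $i \le \frac{\#T+1}{2}$, so Theorem \ref{thm1.3}(ii) applies and gives $h_{i-1}(P_T) \le h_i(P_T)$ for every such $T$. Hence the vanishing of the sum forces $h_i(P_T)=h_{i-1}(P_T)$ for all $T$ with $\#T=2i-1$, and conversely that condition makes every summand, and hence $\overline g_i(P)$, equal to zero.

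There is essentially no obstacle here: the content is entirely contained in the displayed identity from the proof of Theorem \ref{thm1.2} together with the nonnegativity supplied by Theorem \ref{thm1.3}. The only points requiring a line of care are (i) confirming the positivity of the binomial coefficient $\binom{d-i+1}{i}$ throughout $1 \le i \le \lfloor\frac{d+1}{2}\rfloor$, which is where the precise range in the statement matters, and (ii) making sure that the hypothesis $i \le \frac{\#T+1}{2}$ needed for Theorem \ref{thm1.3}(ii) is met when $\#T = 2i-1$, which it is with equality. Given these, the corollary follows at once, which is why the excerpt can legitimately say ``the above proof clearly implies'' it.
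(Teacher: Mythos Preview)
Your proposal is correct and is precisely the argument the paper has in mind: the identity from the proof of Theorem \ref{thm1.2} expresses $\binom{d-i+1}{i}\overline g_i(P)$ as a positive multiple of a sum of the nonnegative terms $h_i(P_T)-h_{i-1}(P_T)$ (nonnegativity by Theorem \ref{thm1.3}(ii)), so vanishing of $\overline g_i(P)$ is equivalent to vanishing of every summand. Your checks that $\binom{d-i+1}{i}>0$ and that $i \le \lfloor(\#T+1)/2\rfloor$ when $\#T=2i-1$ are exactly the small verifications needed to make the implicit argument rigorous.
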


We finally prove Theorem \ref{thm1.4}.
Before giving a proof,
we show the following simple lemma. 
A simplicial complex is said to be \textit{pure}
if all its facets have the same cardinality.

\begin{lemma}\label{lem:balancedG}
Let $\Delta$ be a $(d-1)$-dimensional pure balanced simplicial complex on $[n]$.
For $0\leq i \leq d$, we have
$$\sum_{v\in [n]}\overline{g}_i \big(\lk_\Delta(v) \big)=i\overline{g}_{i+1}(\Delta)+(d-i)\overline{g}_i(\Delta).$$
\end{lemma}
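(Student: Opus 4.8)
The plan is to reduce the identity to a purely combinatorial statement about flag $h$-numbers and links, and then verify it by double counting. Recall that for a pure $(d-1)$-dimensional balanced complex $\Delta$ with coloring $\kappa\colon[n]\to[d]$, the link $\lk_\Delta(v)$ is a pure $(d-2)$-dimensional balanced complex whose colors lie in $[d]\setminus\{\kappa(v)\}$, so $\overline g_i(\lk_\Delta(v)) = ih_i(\lk_\Delta(v)) - (d-i)h_{i-1}(\lk_\Delta(v))$ (the ambient dimension dropped by one). Expanding the left side of the claimed identity gives
\[
\sum_{v\in[n]}\overline g_i(\lk_\Delta(v)) = i\sum_{v\in[n]}h_i(\lk_\Delta(v)) - (d-i)\sum_{v\in[n]}h_{i-1}(\lk_\Delta(v)),
\]
so it suffices to understand the sums $\sum_{v\in[n]} h_j(\lk_\Delta(v))$ in terms of $h$-numbers of $\Delta$ itself.

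The key step is therefore a link-sum formula: I expect
\[
\sum_{v\in[n]} h_j(\lk_\Delta(v)) = (j+1)h_{j+1}(\Delta) + (d-j)h_j(\Delta)
\]
for $0\le j\le d-1$. The natural way to prove this is to pass to flag $h$-numbers. For a vertex $v$ with $\kappa(v)=\ell$, the link $\lk_\Delta(v)$ is balanced with color set $[d]\setminus\{\ell\}$, and one checks directly from the definitions of $f_S$ and $h_S$ that $h_S(\lk_\Delta(v))$ can be written in terms of the flag $f$-numbers $f_{S'}(\st_\Delta(v))$, hence in terms of $f_{S'\cup\{\ell\}}(\Delta)$ for $S'\subseteq S$. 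Summing over all $v$ with a fixed color $\ell$ and then over $\ell\notin S$, and regrouping, should produce exactly a combination of $h_{S\cup\{\ell\}}(\Delta)$ (the term where the new color is ``used'') and $h_S(\Delta)$ (the term where it is not), with the multiplicities $1$ and $1$ respectively per color; summing over the $j+1$ ways to pick the extra color in the first case and the $d-j$ remaining colors in the second yields the stated coefficients after using $h_j(\Delta)=\sum_{\#S=j}h_S(\Delta)$ and $h_{j+1}(\Delta)=\sum_{\#S=j+1}h_S(\Delta)$. An alternative, possibly cleaner route is to use the known non-flag identity $\sum_{v}h_j(\lk_\Delta(v)) = (j+1)h_{j+1}(\Delta) + (d-j)h_j(\Delta)$, which for Cohen-Macaulay complexes follows from the exact sequence relating $\KK[\Delta]$ to $\bigoplus_v \KK[\lk_\Delta(v)]$ (an Euler-characteristic/Hilbert-series computation), but since here $\Delta$ is only assumed pure, the bare-hands double-counting argument on $f$-vectors is safer; one just verifies the polynomial identity $\sum_v h(\lk_\Delta(v);t) = \frac{d}{dt}\big(t\,h(\Delta;t)\big)$-type relation combinatorially, which is a standard manipulation.

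Granting the link-sum formula, the proof finishes by pure algebra: substitute to get
\[
\sum_{v\in[n]}\overline g_i(\lk_\Delta(v)) = i\big((i+1)h_{i+1}(\Delta)+(d-i)h_i(\Delta)\big) - (d-i)\big(ih_i(\Delta)+(d-i+1)h_{i-1}(\Delta)\big),
\]
and then observe that the $h_i(\Delta)$ terms must be reorganized: write $i(i+1)h_{i+1} = i\big((i+1)h_{i+1} - (d-i)h_i\big) + i(d-i)h_i = i\overline g_{i+1}(\Delta) + i(d-i)h_i$, and $-(d-i)(d-i+1)h_{i-1} + (d-i)\cdot\big(\text{leftover }h_i\big)$ should assemble into $(d-i)\overline g_i(\Delta)$. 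Concretely, collecting terms: the coefficient of $\overline g_{i+1}(\Delta)$ comes out to $i$, the coefficient of $\overline g_i(\Delta)$ comes out to $d-i$, and all residual $h_i(\Delta)$ contributions cancel — this is the routine calculation I would not belabor in the final write-up beyond displaying the two or three lines.

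The main obstacle is establishing the link-sum formula $\sum_v h_j(\lk_\Delta(v)) = (j+1)h_{j+1}(\Delta)+(d-j)h_j(\Delta)$ cleanly under the sole hypothesis of purity (Cohen-Macaulayness is not assumed in Lemma~\ref{lem:balancedG}). The balanced/flag bookkeeping — keeping track of which color is removed when passing to a link and correctly identifying $f_S(\lk_\Delta(v))$ with a flag $f$-number of $\Delta$ — is where sign and index errors are most likely, so I would set up the flag version carefully first, prove it, and only then sum over colors to recover the non-flag identity. Everything after that is a short, deterministic computation.
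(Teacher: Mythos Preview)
Your approach is essentially the same as the paper's: expand the definition of $\overline g_i$, invoke the link-sum identity $\sum_v h_j(\lk_\Delta(v)) = (j+1)h_{j+1}(\Delta)+(d-j)h_j(\Delta)$ for pure $\Delta$, and substitute. The paper simply cites this identity from \cite[Proposition~2.3]{Sw2} rather than reproving it; note that it holds for any pure $(d-1)$-dimensional complex and does not require balancedness, so the flag-$h$-number bookkeeping you propose is unnecessary --- the non-flag double count $\sum_v f_{j-1}(\lk_\Delta(v)) = (j+1)f_j(\Delta)$ plus the uniform link dimension guaranteed by purity already suffices.
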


\begin{proof}
By the definition of the balanced $g$-numbers, we have
\begin{align}
\label{3-3}
\sum_{v\in [n]}\overline{g}_i(\lk_\Delta(v))=
i\left(\sum_{v\in [n]}  h_i\big(\lk_\Delta(v)\big) \right)-(d-i) \left(\sum_{v\in[n]}h_{i-1}\big(\lk_\Delta(v)\big) \right).
\end{align}
On the other hand, since $\Delta$ is pure, 
it follows from \cite[Proposition 2.3]{Sw2} that
$$\textstyle \sum_{v\in [n]}h_i\big(\lk_\Delta(v)\big) = (i+1)h_{i+1}(\Delta)+(d-i)h_{i}(\Delta)$$
for all $i$.
By substituting the above equation in \eqref{3-3},
we get the desired equation.
\end{proof}

Now Theorem \ref{thm1.4} directly follows from Lemma \ref{lem:balancedG} and the next proposition.

\begin{proposition}
Let $P$ be a balanced simplicial $d$-polytope and let $v$ be a vertex of $P$.
If $\overline g_i(P)=0$ for some $i \leq \frac d 2$, then $\overline g_i(\lk_P(v))=0$.
\end{proposition}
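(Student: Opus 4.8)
The plan is to reduce the claim about the link $\lk_P(v)$ to Corollary \ref{3.4}, exploiting the fact that links of rank-selected subcomplexes are again rank-selected subcomplexes of links. Fix the vertex $v$ and let $c=\kappa(v)$. Corollary \ref{3.4} tells us that $\overline g_i(P)=0$ is equivalent to $h_i(P_T)=h_{i-1}(P_T)$ for every $T\subseteq[d]$ with $\#T=2i-1$; by the same corollary applied to the balanced $(d-1)$-polytope-like complex $\lk_P(v)$ (which is a simplicial $(d-1)$-polytope, hence Gorenstein*, and is balanced with colors in $[d]\setminus\{c\}$), the desired conclusion $\overline g_i(\lk_P(v))=0$ is equivalent to $h_i((\lk_P(v))_U)=h_{i-1}((\lk_P(v))_U)$ for every $U\subseteq[d]\setminus\{c\}$ with $\#U=2i-1$. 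So it suffices to prove, for each such $U$, that $h_i((\lk_P(v))_U)=h_{i-1}((\lk_P(v))_U)$.

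The key observation is that $(\lk_P(v))_U=\lk_{P_T}(v)$ where $T=U\cup\{c\}$, since a face $F$ with $\kappa(F)\subseteq U$ lies in $\lk_P(v)$ iff $F\cup\{v\}\in P$ iff $F\in\lk_{P_T}(v)$ (using $c\notin U$). Here $\#T=2i$. Now $P_T$ is Cohen-Macaulay by Lemma \ref{2.2}(iv), and I want to relate the $h$-numbers of $\lk_{P_T}(v)$ to those of $P_T$ and of the rank-selected subcomplex $P_U$. The natural tool is the exact-sequence (or Stanley-Reisner) relationship between $\KK[P_T]$, $\KK[P_U]=\KK[(P_T)_U]$, and $\KK[\lk_{P_T}(v)]$ coming from the decomposition of $\st_{P_T}(v)$; concretely, one has $h_j((P_T)_{T\setminus\{c\}})=h_j(P_U)$ and, combining the definition of $h$-numbers with the fact that $P_T$ is balanced with $\#T=2i$ colors, one gets $h_i(P_T)=h_{2i-i}(P_T)=h_i(P_T)$ trivially, so the real input must be Theorem \ref{thm1.3}(i) applied to $P_T$: it gives $h_{i-1}(P_T)\le h_{i+1}(P_T)$ and $h_i(P_T)=h_{\#T-i}(P_T)=h_i(P_T)$. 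I would then use the already-established fact (from the proof of Theorem \ref{thm1.2} / Corollary \ref{3.4}, now applied one level up) that $\overline g_i(P)=0$ forces $h_i(P_T)=h_{i-1}(P_T)$ for \emph{all} $T$ of size $2i-1$, and sum the vertex-link identity of Lemma \ref{lem:balancedG}-type over the rank-selected subcomplex $P_T$ of size $2i$: applying that summation formula inside $P_T$ expresses $\sum_{w} \overline g_i(\lk_{P_T}(w))$ in terms of $\overline g_{i+1}(P_T)$ and $\overline g_i(P_T)$, both of which vanish by Theorem \ref{thm1.3}(i) (since $i\le \#T/2=i$, equality forces the relevant balanced $g$-numbers to be zero for the rank-selected complex $P_T$, as $h_{i-1}(P_T)\le h_i(P_T)=h_{\#T-i}(P_T)\le h_{i-1}(P_T)$ gives $h_{i-1}(P_T)=h_i(P_T)$). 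Since each summand $\overline g_i(\lk_{P_T}(w))\ge 0$ — again by Theorem \ref{thm1.3}(i) applied to the simplicial polytope $\lk_{P_T}(w)$, or rather to $\lk_P(w)$ and its rank-selections — a sum of nonnegative terms being zero forces every term to vanish, in particular $\overline g_i(\lk_{P_T}(v))=0$, i.e. $h_i((\lk_P(v))_U)=h_{i-1}((\lk_P(v))_U)$.

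Assembling these: for every $U\subseteq[d]\setminus\{c\}$ with $\#U=2i-1$ we obtain $h_i((\lk_P(v))_U)=h_{i-1}((\lk_P(v))_U)$, and Corollary \ref{3.4} applied to the balanced simplicial $(d-1)$-polytope $\lk_P(v)$ yields $\overline g_i(\lk_P(v))=0$.

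The main obstacle I expect is establishing cleanly that $\overline g_i(P)=0$ implies $\overline g_i(P_T)=0$ and $\overline g_{i+1}(P_T)=0$ for the relevant $T$ of size $2i$ — i.e., propagating the equality from $P$ down to its rank-selected subcomplexes at the right spot. Corollary \ref{3.4} gives $h_{i-1}(P_{T'})=h_i(P_{T'})$ only for $\#T'=2i-1$, so for $\#T=2i$ I need to either extract the equality from Theorem \ref{thm1.3}(i) directly (using $h_i(P_T)=h_{\#T-i}(P_T)$ together with the chain $h_{i-1}\le h_i\le h_{i-1}$, where the last inequality must come from combining $h_i(P_T)=h_i(P_T)$ with the size-$2i-1$ equalities via Lemma \ref{3.3}-style double counting), or argue that $h_{i-1}(P_T)\le h_i(P_T)$ becomes an equality because summing over all size-$(2i-1)$ subsets of $T$ and using Corollary \ref{3.4} pins it down. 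This bookkeeping — choosing the correct cardinality at each stage and matching it against the available symmetry $h_i=h_{\#T-i}$ — is the delicate part; the rest is formal manipulation of the identities already in hand.
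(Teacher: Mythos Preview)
Your reduction via Corollary~\ref{3.4} to showing $h_i\big((\lk_P(v))_U\big)=h_{i-1}\big((\lk_P(v))_U\big)$ for every $U\subseteq[d]\setminus\{\kappa(v)\}$ with $\#U=2i-1$ is the same starting point as the paper's. The identification $(\lk_P(v))_U=\lk_{P_T}(v)$ with $T=U\cup\{\kappa(v)\}$ is also fine. But from there your approach diverges and runs into a genuine gap.

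The problem is the step where you need $\overline g_{i+1}(P_T)=0$ for $\#T=2i$. Your double--counting argument (summing the equalities $h_i(P_{T'})=h_{i-1}(P_{T'})$ over all $(2i-1)$-subsets $T'\subseteq T$) yields exactly
\[
0=\sum_{T'\subseteq T,\ \#T'=2i-1}\big(h_i(P_{T'})-h_{i-1}(P_{T'})\big)=i\,h_i(P_T)-(i+1)\,h_{i-1}(P_T)=\overline g_i(P_T),
\]
so $\overline g_i(P_T)=0$; but this says $h_i(P_T)=\tfrac{i+1}{i}h_{i-1}(P_T)$, \emph{not} $h_i(P_T)=h_{i-1}(P_T)$ as your sketch claims. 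More importantly, plugging $\overline g_i(P_T)=0$ into the balanced Swartz identity for $P_T$ (Lemma~\ref{lem:balancedG} with $d$ replaced by $2i$) gives
\[
\sum_{w}\overline g_i\big(\lk_{P_T}(w)\big)=i\,\overline g_{i+1}(P_T)=i(i+1)\big(h_{i+1}(P_T)-h_{i-1}(P_T)\big).
\]
Theorem~\ref{thm1.3}(i) only tells you $h_{i-1}(P_T)\le h_{i+1}(P_T)$, so the right-hand side is nonnegative --- but there is no mechanism in the hypotheses to force it to vanish. The complex $P_T$ is not Gorenstein*, so no Dehn--Sommerville symmetry is available, and Corollary~\ref{3.4} gives information only at cardinality $2i-1$. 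Thus your nonnegativity-of-summands argument cannot conclude.

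The paper sidesteps this entirely with an algebraic comparison. Since $(\lk_P(v))_T$ is a subcomplex of $P_T$, there is a surjection $\QQ[P_T]\twoheadrightarrow\QQ[(\lk_P(v))_T]$, and hence, after passing to $/(\Theta,\omega)$ for a common generic choice of $\Theta$ and $\omega$ as in Theorem~\ref{3.2}, one has
\[
h_i\big((\lk_P(v))_T\big)-h_{i-1}\big((\lk_P(v))_T\big)\ \le\ h_i(P_T)-h_{i-1}(P_T)=0,
\]
with the left-hand side nonnegative by Theorem~\ref{thm1.3}(ii). This avoids any need to control $h_{i+1}(P_T)$.
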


\begin{proof}
By Corollary \ref{3.4}, what we must prove is 
\begin{align}
\label{3-2}
h_i\big((\lk_P(v))_T\big) - h_{i-1}\big((\lk_P(v))_T\big)=0
\end{align}
for all $T \subseteq [d] \setminus \{ \kappa (v)\}$ with $\#T=2i-1$.
Fix a subset $T \subseteq [d] \setminus \{ \kappa (v)\}$ with $\#T=2i-1$.
We prove \eqref{3-2}.

Consider $\QQ[P_T]$ and $\QQ[(\lk_P(v))_T]$.
Note that both $\QQ[P_T]$ and $\QQ[(\lk_P(v))_T]$ are of Krull dimension $\#T$.
By Theorem \ref{3.2}, there is a linear system of parameters $\Theta$ for both $\QQ[P_T]$ and $\QQ[(\lk_P(v))_T]$, and a linear form $\omega $ such that
\begin{align*}
& \dim_{\QQ}\big(\QQ[P_T]/((\Theta,\omega)\QQ[P_T])\big)_i\\
&=
\dim_{\QQ}\big(\QQ[P_T]/(\Theta\QQ[P_T])\big)_i \! -\!  \dim_{\QQ}\big(\QQ[P_T]/(\Theta\QQ[P_T])\big)_{i-1}\\
&=h_{i}(P_T)-h_{i-1}(P_T),
\end{align*}
where we use the injectivity in Theorem \ref{3.2} for the first equality
and use Lemmas \ref{2.2} (iv) and \ref{cm} for the second equality,
and similarly
$$\dim_{\QQ}\big(\QQ[(\lk_P(v))_T]/\big((\Theta,\omega)\QQ[(\lk_P(v))_T]\big)\big)_i=h_i\big(\big(\lk_P(v)\big)_T\big)-h_{i-1}\big(\big(\lk_P(v)\big)_T\big).$$
Since $(\lk_P(v))_T$ is a subcomplex of $P_T$,
$\QQ[(\lk_P(v))_T]$ is a quotient ring of $\QQ[P_T]$ and
thus
$$
\dim_{\QQ}\big(\QQ[(\lk_P(v))_T]/((\Theta,\omega)\QQ[(\lk_P(v))_T])\big)_i
\leq
\dim_{\QQ}\big(\QQ[P_T]/((\Theta,\omega)\QQ[P_T])\big)_i.$$
As the assumption $\overline g_i(P)=0$ implies  $h_i(P_T)-h_{i-1}(P_T)=0$ by Corollary \ref{3.4}, 
we get the desired equation \eqref{3-2}.
\end{proof}

\noindent
\textbf{Acknowledgments}:
The second author was partially supported by JSPS KAKENHI 25400043.
We would like to thank Steven Klee and Isabella Novik for their helpful comments on the paper.

\end{document}